\theoremstyle{definition}
\theoremstyle{plain}
\newcommand{\C}{\mathbb{C}}
\newcommand{\M}{\mathcal{M}}
\newcommand{\wt}{\textup{wt}}
\theoremstyle{plain}
\newtheorem{theorem}{Theorem}
\newtheorem{lemma}{Lemma}
\newtheorem{proposition}{Proposition}
\theoremstyle{definition}
\theoremstyle{remark}
\newcommand{\Z}{\mathbb{Z}}
\newcommand{\V}{\mathcal V}
\numberwithin{equation}{section} 
\newcommand{\E}{\ensuremath{\mathcal{E}}}
\newcommand{\g}{\ensuremath{\Gamma}}
\newcommand{\ps}{{\raise 1pt\hbox{\tiny (}}}
\newcommand{\pss}{{\raise 1pt\hbox{\tiny [}}}
\newcommand{\pdd}{{\raise 1pt\hbox{\tiny ]}}}
\newcommand{\pd}{{\raise 1pt\hbox{\tiny )}}}
\newcommand{\bs}{{\raise 1pt\hbox{\tiny [}}}
\newcommand{\bd}{{\raise 1pt\hbox{\tiny ]}}}
\def\cross{\mathinner{\mathrel{\raise0.8pt\hbox{$\scriptstyle>$}}
                 \joinrel\mathrel\triangleleft}}
\def\W{\mathcal{V}}
\def\W{\mathcal{W}}
\def\K{\mathcal{K}}
\def\Hom{\mathop\text{\rm Hom}\nolimits}
\newcommand{\be}{\begin{equation}}
\newcommand{\ee}{\end{equation}}
\newcommand{\nn}{\nonumber \\}
\newcommand{\nc}{\newcommand}
\nc{\cali}{\mathcal}
\nc{\on}{\operatorname}
\nc{\Wick}{{\mb :}}
\nc{\ddz}{\frac{\partial}{\partial z}}
\nc{\ch}{\mbox{ch}}
\nc{\Oo}{{\cali O}}
\nc{\cond}{|\,}
\nc{\bib}{\bibitem}
\nc{\pone}{\Pro^1}
\nc{\pa}{\partial}
\nc{\arr}{\rightarrow}
\nc{\larr}{\longrightarrow}
\nc{\ket}{)}
\nc{\bra}{(}
\nc{\gam}{\bar{\gamma}}
\nc{\ep}{\epsilon}
\nc{\su}{\widehat{{\mf s}{\mf l}}_2}
\nc{\sw}{{\mf s}{\mf l}}
\nc{\h}{{\mf h}}
\nc{\n}{{\mf n}}
\nc{\ab}{\mf{a}}
\nc{\is}{{\mb i}}
\nc{\js}{{\mb j}}
\nc{\He}{{\cali H}}
\nc{\inv}{^{-1}}
\nc{\ol}{\overline}
\nc{\wh}{\widehat}
\nc{\dst}{\displaystyle}
\nc{\delt}{\partial_t}
\nc{\ddt}{\frac{\partial}{\partial t}}
\nc{\delx}{\partial_x}
\nc{\mb}{\mathbf}
\nc{\mf}{\mathfrak}
\nc{\mbb}{\mathbb}
\nc{\Ctt}{\C((t))}
\nc{\Ct}{\C[t,t\inv]}
\nc{\ghat}{\wh{\g}}
\nc{\un}{\underline}
\nc{\mc}{\mathcal}
\nc{\BB}{{\mc B}}
\nc{\bb}{{\mf b}}
\nc{\kk}{{\mf k}}
\nc{\frob}{\times}
\nc{\sm}{\setminus}
\nc{\Pp}{{\mathbb P}^1}
\nc{\Aa}{{\mc A}}
\nc{\AutO}{\on{Aut}\Oo}
\nc{\AUTO}{\un{\on{Aut}}\Oo}
\nc{\AUTK}{\un{\on{Aut}}\K}
\nc{\Heout}{\He_{\out}}
\nc{\Hetil}{{\widetilde\He}}
\nc{\wb}{\overline}
\nc{\Res}{\on{Res}}
\nc{\pitil}{\Pi}
\nc{\Ctil}{\wt{C}}
\nc{\auto}{\on{Aut} \Oo}
\nc{\phitil}{\wt{\phi}}
\nc{\gz}{\g_{\vec z}}
\nc{\tensorM}{\bigotimes_{i=1}^N{\mathbb M}_i}
\nc{\tensorW}{\bigotimes_{i=1}^N W_{\nu_i,k}}
\nc{\out}{\on{out}}
\nc{\m}{{\mathfrak m}}
\nc{\gx}{\g^0_{\vec x}}
\nc{\hx}{\He^0_{\vec x}}
\nc{\tensorpi}{\pi_{\nu_1,\ldots,\nu_N}^\kappa}
\nc{\Phizw}{\Phi_{\vec w}({\vec z})}
\nc{\Pro}{{\mathbb P}}
\nc{\De}{D}
\nc{\us}{\underset}
\nc{\Ll}{\mc L}
\nc{\dR}{\on{dR}}
\nc{\T}{{\mc T}}
\nc{\Xn}{\overset{\circ}X{}^n} \nc{\Dn}{\overset{\circ}D{}^n}
\nc{\Dxn}{\overset{\circ}D{}^n_x} \nc{\varphitil}{\wt{\varphi}}
\nc{\lf}{{\mf l}}
\nc{\Wir}{\on{Vir}}
\nc{\bfgn}{(g_1, \ldots, g_n)}
\nc{\bfzn}{(z_1, \ldots, z_n)}
\def\Z{{\mathbb Z}}
\def\C{{\mathbb C}}
\def\wt{{\rm wt}}
\def\End{{\rm End}}
\def\ch{{\rm  ch}}
\def\1{{\bf 1}}
\def\l{{\lambda}}
\def\<{\langle}
\def\>{\rangle}
\def\Res{{\rm Res}}
\def\a{\alpha}
\def\b{\beta}
\def\M{{\mathcal M}}
\def\Hom{{\rm Hom}}
\theoremstyle{remark}
\theoremstyle{definition}
\newcommand{\cC}{{\mathcal C}}
\newcommand{\comment}[1]{}
\begin{document}
\title[K-theory cohomology of associative algebra twisted bundles] 
{K-theory cohomology of associative algebra twisted bundles}  
                                
\author{A. Zuevsky} 
\address{Institute of Mathematics \\ Czech Academy of Sciences\\ Zitna 25, Prague\\ Czech Republic}

\email{zuevsky@yahoo.com}

\begin{abstract}
We introduce and study a $K$-theory of twisted bundles for 
associative algebras $A(\mathfrak g)$ of formal series with an infinite-Lie algebra coefficients 
over arbitrary compact topological spaces.  
Fibers of such bundles are given by elements of algebraic completion of 
the space of all formal series 
in complex parameters,  
sections are provided by rational functions with 
prescribed analytic properties. 
In this paper we introduce and study K-groups $K(A(\mathfrak g), X)$ 
of twisted $A(\mathfrak g)$-bundles as equivalence classes $[\E]$ of $A(\mathfrak g)$-bundle $\E$.  
We show that for any twisted $A(\mathfrak g)$-bundle 
$\E$ there exist another bundle $\widetilde{\E}$
such that an element of $K(A(\mathfrak g), X)$ for $\E$ 
can be represented in the form $[\E]/[\widetilde{\E}]$.  
The group $K(A(\mathfrak g), X)$ homomorphism properties 
with respect to tensor product, and  
splitting properties with respect to reductions of $X$ into base points.  
We determine also cohomology of cells of K-groups 
for the factor $X/Y$ of two compact spaces $X$ and $Y$. 

\end{abstract}

\keywords{Associative algebras, fiber bundles, rational functions with prescribed properties, K-groups}

\vskip12pt  

\maketitle
\section{Data availability statement}
The author confirms  that: 

1.) All data generated or analysed during this study are included in this published article. 

2.)   Data sharing not applicable to this article as no datasets were generated or analysed during the current study.
\section{Introduction}
The equivalence classes of bundles \cite{A, DLMZ, H}
 associated to various algebraic structures defined 
on topological spaces 
allow to use combinations of algebraic and topological properties of non-commutative 
objects in terms of abelian groups. 
 It is natural to consider bundles using modules of associative algebras.  
Such bundles are important for computations in cohomology theory on smooth manifolds, 
 \cite{W, Wi, Wag}.  
 In \cite{Zu} we introduced and studied fiber twisted bundles related to 
modules of associative algebras for 
infinite-dimensional Lie algebras 
 in the form of group of automorphisms torsors originating from 
local geometry. 
 Our original motivation for
this work was to understand continuous cohomology \cite{Bott, BS, Fei, Fuks, GKF, PT, Wag}
 of non-commutatieve structrues over 
compact topological spaces. 
In particular \cite{BS},  
one hopes to relate cohomology of infinite-dimensional Lie algebras-valued series
 considered 
on complex manifolds with fiber bundles on auxiliary topological spaces \cite{PT}.  
Let $\mathfrak g$ be an infinite-dimensional Lie algebra \cite{K}. 
Starting from algebraic completion $G_{(z_1, \ldots, z_n)}$ of the space of $\mathfrak g$-valued series 
in a few formal complex parameters $(z_1, \ldots, z_n)$,    
we introduce the category $\Oo_{A(\mathfrak g)}$ of associative algebra modules for the 
associative algebra $A(\mathfrak g)$ originating  
from $G_{(z_1, \ldots, z_n)}$ by means of factorization with respect to two natural multiplications \cite{Z}. 
Local parts of twisted bundles are constructed as principal bundles of 
products of ${\rm Aut}(\mathfrak g)$-modules 
and spaces of all sets of local parameters of a $X$-covering. 
 As in the untwisted case \cite{DLMZ}, this result is crucial in defining $A(\mathfrak g)$ 
 K-groups and studying the cohomology their properties. 
In \cite{DLMZ} they explored the vertex operator algebra approach to K-theory of 
compact topological spaces. 
Vertex operator algebra $V$ bundles and associative algebra bundles related to $V$
gave rise to a series of exact sequences ot K-groups for a compact topological space. 
An alternative approach to vertex operator algebra bundles was given in \cite{BZF}. 

In this paper we introduce and determine properties of 
K-groups $K(A(\mathfrak g), X)$ defined for 
twisted $A(\mathfrak g)$-bundles for the associative algebras $A(\mathfrak g)$
for ${\mathfrak g}$ on compact topological space $X$. 
In order to formulate the K-theory, 
 we use the axiomatics of prescribed rational functions. 
We show, in particular, that all elements of $K(A(\mathfrak g), X)$ for a 
twisted $A(\mathfrak g)$-bundle $\E$ 
can be represented in the form $[\E]/[\widetilde{\E}]$ 
with another bundle $\widetilde{\E}$.  
The group $K(A(\mathfrak g), X)$ exhibits natural homomorphism properties 
with respect to tensor product of associative algebras, and possesses 
splitting properties with respect to reductions of $X$ into a point. 
We study also cohomology of cells of K-groups 
for the factor $X/Y$ of two compact spaces $X$ and $Y$. 
The cells as short exact sequences of K-groups of $X/Y$, $X$ and $Y$. 
The cohomology of K-cells is determined explicitly. 
K-groups of twisted associative algebra bundles possess non-vanishing 
cohomology in contrast to K-groups for vertex operator algebras. 
Studies of K-groups of bundles considered in this paper find their applications 
in conformal field theory \cite{BZF, TUY}, deformation theory \cite{GerSch, Kod, Ma}, 
 vertex algebras \cite{FHL, H1}, and algebraic topology \cite{Bott, Fei}. 
\section{Prescribed rational functions originating from matrix elements}   
In this Section the space of prescribed rational functions is defined as 
functions with certain analytical and symmetry properties \cite{H1}.      
They depend on an infinite number of non-commutative parameters.  
Let $X_{(\alpha)}=\left\{ X_\alpha, \alpha \in \Z_{>0} \right\}$  
 be an open covering of a compact topological space $X$ 
which gives a local trivialization of the $A(\mathfrak g)$ fiber bundle.    
 Let $\mathfrak g$, be an infinite-dimensional Lie algebra. 
Denote by $G=G_{(z_1, \ldots, z_n)}$ 
be the graded (with respect to a grading operator $K_G$)   
algebraic completion of the space of all formal series 
in each of complex formal parameters $(z_1, \ldots, z_n)$ individually, with expansion coefficients 
as elements $g \in G$,  
and satisfying certain properties described below.   
 We denote by $(x_1, \ldots, x_n)=(g_1, z_1; \ldots; g_n, z_n)$     
for $(g_1, \ldots, g_n) \in G^{\otimes n}$. 
It is assumed that on $G$ 
 there exists a non-degenerate bilinear pairing $(. , .)$. 
  $G'=\coprod_{\l \in \C} G_{\l}^*$ denotes 
the gaded dual for $G=\bigoplus_{\lambda \in{\C}}G_{(\lambda)}$ 
 with respect to $(.,.)$.
For a fixed $\theta \in G^*$,    
and varying $(x_1, \ldots, x_n)$ 
 consider the matrix elements 
\begin{equation}
\label{toppa}
F(x_1, \ldots, x_n) = \left( \theta, f (x_1, \ldots, x_n) \right),    
\end{equation}
where $F(x_1, \ldots, x_n) \in \C(z_1, \ldots, z_n)$ 
 depends implicitly on $(g_1, \ldots, g_n)$.   
In this paper we consider 
 meromorphic functions 
 of 
 defined on a compact topological space 
 which are extendable to rational functions  $R(f(z_1, \ldots, z_n))$ 
 on larger domains of several complex formal parameters $(z_1, \ldots, z_n)$.    
Denote by $F_n \mathbb C$ the  
configuration space of $n \ge 1$ ordered coordinates in $\mathbb C^n$, 
 $F_n\mathbb C=\{(z_1, \ldots, z_n) \in \mathbb C^n\;|\; z_i \ne z_j, i\ne j\}$. 
In order to work with $G$-elements $(g_1, \ldots, g_n)$ objects on $X$,  
 we consider converging rational  
functions $f(x_1, \ldots, x_n)\in G$ of $(z_1, \ldots, z_n) \in F_n \mathbb C$. 
For an arbitrary fixed $\theta \in G^*$,      
we call 
 a map linear in $(g_1, \ldots, g_n)$ and $(z_1, \ldots, z_n)$, 
$F: (x_1, \ldots, x_n) 
\mapsto   
    R(  (\theta, f(x_1, \ldots, x_n ) )$,  
  a rational function in $(z_1, \ldots, z_n)$.   
The poses are only allowed at at 
$z_i=z_j$, $i\ne j$.  
We define left action of the permutation group $S_n$ on $F(z_1, \ldots, z_n)$ 
by
$\nc{\bfzq}{(z_1, \ldots, z_n)}
\sigma(F)(x_1, \ldots, x_n)=F\left(g_1, z_{\sigma(1)};  \ldots;  g_n, z_{\sigma(n)} \right)$.  

We assume \cite{H1} that 
$G_{ (\l) }=\{w\in G | K_0 w=\l w\, \; \l=\wt(w)\}$,  
such that 
$G_{(\l)}=0$ when the real part of $\alpha$ is sufficiently negative, 
 Moreover we require that
$\dim G_{(\l)} < \infty$, i.e., 
 it is finite, and for fixed $\l$, $G_{(n+\l)}=0$,  for all 
small enough integers $n$. 
 An admissible $G$ 
is a $\C$-graded vector space with $G_{(0)} \ne 0$.   
which satisfies the following conditions.
 Assume that 
 $G$ is equipped with a map    
$\omega_g:  G[(z_1, \ldots, z_n)  \to   G[[(z_1, \ldots, z_n), (z_1^{-1}, \ldots, z^{-1}_n)]]$,   
 $g \mapsto \omega_g(z_1, \ldots, z_n)\equiv \sum_{l\in \C} g_l \; z^l$.     
For each element $g\in G$, and $(z_1, \ldots, z_n) \in \C^{\otimes n}$
 let us associate a formal series  
$\omega_g(x) = \omega_{g}(z_1, \ldots, z_n)=  
\sum\limits_{(s_1, \ldots, s_n) \in \C^{\otimes n} }  g_{(s_1, \ldots, s_n)}
 \; z^{s_1}_1 \ldots z^{s_n}_n $.  
For $g\in G$,    
$w\in $, $n \in \C$,   
$g_n w=0$, $n \gg 0$, 
 $\omega_{\bf 1}(z_1, \ldots, z_n)={\rm Id}$, 
 For $g \in G$, $\omega_g(z_1, \ldots, z_n) w$ contains only finitely many negative 
power terms, that is, $\omega_g(z_1, \ldots, z_n)w\in G$.  
The locality and associativity properties are assumed for 
matrix elements \eqref{toppa}, 
 for $g_1$, $g_2 \in \mathfrak g$, $w \in G$, 
$\theta \in G'$,   
 the series   
$\left(\theta, \omega_{g_1}(z_1) \; \omega_{g_2}(z_2)w \right)$,  
 $\left(\theta, \omega_{g_2}(z_2) \; \omega_{g_1}(z_1) w \right)$,  
$\left( \theta, \omega_{ \omega_{g_1}(z_1-z_2)  g_2 } (z_2) w \right)$,   
are absolutely convergent
in the regions $|z_1|>|z_2|>0$, $|z_2|>|z_1|>0$,
$|z_2|>|z_1-z_2|>0$, 
respectively, to a common rational function 
in $z_1$ and $z_2$. 
The poles are only allowed at $z_1=0=z_2$, $z_1=z_2$.
 If $g$ is homogeneous then 
$g_m G_{(n)}\subset G_{(\wt u-m-1+n)}$.   
For a subgroup  ${\mathfrak G} \subset {\rm Aut} \; G$,   
 $\mathfrak G$ acts on $G$ as automorphisms if 
$g \; \omega_h(z_1, \ldots, z_n) \; g^{-1}=\omega_{gh}(z_1, \ldots, z_n)$,  
for all $g$, $h \in {\mathfrak G}$.    
 For an admissible $G$ 
the operator $K_G$ satisfies the derivation property
$\omega_{K_G g}(z_1, \ldots, z_n)=\frac{d}{dz}\omega_g(z_1, \ldots, z_n)$. 
An admissible $\C$-graded $G =\bigoplus_{\lambda \in{\C}} G_{(\lambda)}$,  
 $G_{(\l)}=\{w\in G|K_0 w=\l w\}$ 
is called an ordinary. 
 We require that
 $\dim G_{(\l)}$ is finite and for fixed $\l$,  $G_{(s+\l)}=0$, 
for all small enough integers $s$. 
Let us assume that $G_{(0)}=\C\1$.
 $G$ is called rational if 
 it is a direct sum of irreducible admissible $G^i$. 
From \cite{DLM2, Z} we know that, 
all of finite number (up to isomorphisms) irreducible admissible $G^i$ of  
 a rational $G$, 
are ordinary modules. 
 For each $\l\in\C$ denote $D(\l)=\{\l+n|0\leq n\in\Z\}$. 
 The category $\Oo_\mathfrak g$ of 
ordinary $G$ is 
 that for there exist 
finitely many complex weights $(\l_1,...,\l_p)$ such that
with $P(G)=\{\l\in\C |G_{\l}\ne 0\}$, 
$P(G) \subset \bigcup_{i=1}^p D(\l_i)$.  
 Any irreducible module is in $\Oo_\mathfrak g$. 
If $G$ is rational then $\Oo_{\mathfrak g}$ is exactly the category of ordinary modules.   
For two $\mathfrak g$ and $\mathfrak g'$  
the functor $\Oo_{\mathfrak g} \times \Oo_{\mathfrak g'} \to \Oo_{\mathfrak g\otimes \mathfrak g'}$
 such that  $G^1 \times G^2 \to G^1 \otimes G^2$.   
Next recall the notion of a contragredient module \cite{FHL}. 
We denote the natural bilinear pairing on $G'\times G$
 by $(w',w)$ for $w'\in G'$ and $w \in G$.  
 It is called invariant if 
$(w', \omega_g(z_1, \ldots, z_n) w) =(\omega_{g'} w', w)$,   
where $g'=e^{z^3 \partial_z}(-z^{-2})^{K_0}z^{-1}g$, 
for $\omega_{g'}w'_i\in G'$, and $g\in G$.
 Then \cite{FHL, DLMZ} $G'$ with $\omega_g G)$  
is also a module.
 It is irreducible
 if and only if $G$ is irreducible. 
 For any $G$,  
$G' \oplus G$ has a natural
non-degenerate symmetric invariant bilinear pairing defined by
$(u+u',w+w')=(u,w')+(w,u')$
for any $u$, $w\in G$ and $u'$, $w'\in G'$. 
In particular, any
  $G$ can be embedded into a module with a nondegenerate
symmetric invariant bilinear pairing.

Let $(z_1, \ldots, z_n) \in F_n \C$.   
Denote by $T_G$ the translation operator \cite{H1}. 
Denote by $(T_G)_i$ the operator acting on the $i$-th entry.
 We then define the action of partial derivatives on an element $F(x_1, \ldots, x_n)$  
\begin{eqnarray}
\label{cond1}
\partial_{z_i} F(x_1, \ldots, x_n)  &=& F((T_G)_i \; (x_1, \ldots, x_n)),   
\nn
\sum\limits_{i \ge 1} \partial_{z_i}  F(x_1, \ldots, x_n)  
&=&  T_G F(x_1, \ldots, x_n),   
\end{eqnarray}
and call it $T_G$-derivative property. 
For $z \in \C$,  let 
\begin{eqnarray}
\label{ldir1}
 e^{zT_G} F (x_1, \ldots, x_n)   
 = F(g_1, z_1+z ; \ldots;  g_n, z_n+z).   
\end{eqnarray}
 Let us denote by 
${\rm Ins}_i(A)$ the operator of multiplication by $A \in \C$ at the $i$-th position. 
Then we assume that both sides of the expression 
$F\left((g_1, \ldots, g_n), {\rm Ins}_i(z_1, \ldots, z_n) \; (z_1, \ldots, z_n) \right)=   
F\left( {\rm Ins}_i (e^{zT_G}) \; (x_1, \ldots, x_n)\right)$,     
are absolutely convergent
on the open disk $|z|<\min_{i\ne j}\{|z_{i}-z_{j}|\}$, and 
 equal as power series expansions in $z$. 
A rational function has $K_G$-property   
if for $z\in \C^{\times}$ satisfies 
$(zz_1, \ldots, zz_n) \in F_n\C$,  
\begin{eqnarray}
\label{loconj}
z^{K_G } F (x_1, \ldots, x_n) =  
 F \left(z^{K_G} (g_1, zz_1; \ldots; g_n, zz_n) \right). 
\end{eqnarray}

Now we recall the definition of rational functions with prescribed analytical behavior
on a domain of $X$.     
We denote by $P_{k}: G \to G_{(k)}$, $k \in \C$,      
the projection of $G$ on $G_{(k)}$.
  Following \cite{H1}, we formulate the following definition. 
For $i$, $j=1, \dots, (l+k)n$, $k \ge 0$, $i\ne j$, $ 1 \le l', l'' \le n$, 
 let $(l_1, \ldots, l_n)$ be a partition of $(l+ k)n     
=\sum\limits_{i \ge 1} l_i$, and $k_i=l_{1}+\cdots +l_{i-1}$. 
For $\zeta_i \in \C$,  
define 
$h_i  =F ( \omega_{  g_{k_1+l_1} }( z_{k_1+l_1}  - \zeta_1) \ldots 
\omega_{  g_{k_l+l_l} }( z_{k_l+l_l}  - \zeta_l))$, 
for $i=1, \dots, l$.
A rational function $F$ is a rational function with prescribed analytical behavior 
if it satisfies properties \eqref{cond1}--\eqref{loconj}. 
In addition to that, it is assumed that  
the function   
 $\sum\limits_{ (r_1, \ldots, r_l) \in \Z^l}  
F(  P_{r_1} h_1, \zeta_1; \ldots;  P_{r_l} h_l, \zeta_l )$,     
is absolutely convergent to an analytically extension 
in $(z_1, \ldots, z_n)_{l+k}$ 
in the domains 
$|z_{k_i+p} -\zeta_i| 
+ |z_{k_j+q}-\zeta_j|< |\zeta_i -\zeta_j|$,    
for $i$, $j=1, \dots, k$, $i\ne j$, and for $p=1, 
\dots$,  $l_i$, $q=1$, $\dots$, $l_j$.  
The convergence and analytic extention do not depend on complex parameters $(\zeta)_{l}$. 
On the diagonal of $(z_1, \ldots, z_n)_{l+k}$   
 the order of poles is bounded from above by escribed ppositive numbers 
$\beta(g_{l',i}, g_{l'', j})$.   
 For $(g_1, \ldots, g_{l+k}) \in G^{\otimes (l+k)}$,
$z_i\ne z_j$, $i\ne j$ 
$|z_i|>|z_s|>0$, for $i=1, \dots, k$, 
$s=k+1, \dots, l+k$ the sum 
$\sum_{q\in \C}$  
$F(  \omega_{g_1}(x_1) \ldots  \omega_{g_k}(x_k)  
 P_q ( \omega_{g_{1+k}}(z_{1+k}) \ldots \omega_{g_{l+k}}(x_{l+k}) ) )$,     
is absolutely convergent and 
 analytically extendable to a
rational function 
in variables $(z_1, \ldots, z_n)_{l+k}$. 
The order of pole that is allowed at 
$z_i=z_j$ is bounded from above by the numbers 
$\beta(g_{l', i}, g_{l'', j})$. 

For $m \ge 1$ and $1\le p \le m-1$, 
 let $J_{m; p}$ be the set of elements of the group 
$S_m$ which preserve the order of the first $p$ numbers and the order of the last 
$m-p$ numbers, i.e., 
$J_{m, p}=\{\sigma\in S_m\;|\;\sigma(1)<\cdots <\sigma(p),\;
\sigma(p+1)<\cdots <\sigma(m)\}$.
Denote by $J_{m; p}^{-1}=\{\sigma\;|\; \sigma\in J_{m; p}\}$.  
In what follows, we apply the condition 
\begin{equation}
\label{shushu}
\sum_{\sigma\in J_{n; p}^{-1}}(-1)^{|\sigma|}  
\sigma( 
F (g_{\sigma(1)}, z_1; \ldots; g_{\sigma(n)}, z_n))=0.  
\end{equation}
on certain rational functions. 
 The space $\Theta \left(n, k, G_{(z_1, \ldots, z_n)}, U\right)$ 
of $n$ formal complex parameters 
  matrix elements $F(x_1, \ldots, x_n)$ 
  as the space of restricted     
 rational functions 
 with prescribed analytical behavior 
on a $F_n\C$-domain $U \subset X$,  and   
satisfying 
 $T_G$- and $K_G$-properties \eqref{cond1}--\eqref{loconj}, 
 \eqref{shushu}.  
\section{The associative algebra $A(\mathfrak g)$}  
In this Section we remind  \cite{DLM2, Z} the definition and properties of the associative algebra  
$A(\mathfrak g)$ for $\mathfrak g$.    
  For any homogeneous vectors
$h$, $\widetilde{h} \in G$,    
one defines the bilinear extention to $G \times G$ of 
the multiplications   
$h *_\kappa \widetilde{h}= \Res_{z} \left((1+z)^{ \wt(h) }    
 \sum\limits_{l \in \C}h_n z^{l-\kappa} \right).\widetilde {h}$, 
for $\kappa=1$, $2$.    
Here $\Res_z$ denotes the coefficient in front of $z^{-1}$. 
For $h$, $\widetilde{h} \in G$,   
 define $A(\mathfrak g)=G_{(z_1, \ldots, z_n)}/ (  {\rm span}  ( h*_2 \widetilde{h} ) )^\theta$, 
 $\theta=0$, $1$. 
For $\theta=0$ we get back to $G_{(z_1, \ldots, z_n)}$ with associativity property $G \times G$
and expressed via matrix elements. 
 For $\theta=1$ we obtain an associative algebra associated with ordinary associativity.  
In \cite{Z, DLMZ} we find the following   
\begin{theorem}
\label{P3.10}
 The bilinear operation $*_1$ turns $A(\mathfrak g)$ into an associative
algebra with the linear map 
$\phi:  g \mapsto \exp \left( -z^{2} \partial_z \right) (-1)^{-z \partial_z} g$, 
inducing an anti-involution $\nu$ on $A(\mathfrak g)$.
\end{theorem}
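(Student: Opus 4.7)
The plan is to adapt the classical Zhu-algebra argument (closely following \cite{DLM2, Z}) to the present setting. The statement has two parts: (i) $*_1$ descends to a well-defined associative multiplication on the quotient $A(\mathfrak g) = G_{(z_1,\ldots,z_n)}/O_2$, where $O_2 := \text{span}(h *_2 \widetilde h)$; and (ii) the map $\phi$ preserves $O_2$ and induces an anti-involution $\nu$ on the quotient. I would organize the proof in three stages: the ideal property, associativity, and the anti-involution.

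First I would show that $O_2$ is a two-sided ideal with respect to $*_1$, so that $*_1$ passes to $A(\mathfrak g)$. The core calculation is to verify, for homogeneous $a, h, \widetilde h \in G$, that both $a *_1 (h *_2 \widetilde h)$ and $(h *_2 \widetilde h) *_1 a$ lie in $O_2$. This is done by expanding
\[
a *_1 (h *_2 \widetilde h) = \Res_z\!\left((1+z)^{\wt a}\sum_l a_l z^{l-1}\right)\Res_w\!\left((1+w)^{\wt h}\sum_k h_k w^{k-2}\right)\widetilde h,
\]
reordering the two residues using the commutator formula for modes implicit in the locality/associativity axioms of Section~2, and binomially re-expanding the $(1+z)^{\wt a}$ factor about $z = w$. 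This writes $a *_1 (h *_2 \widetilde h)$ as a $*_2$-product plus terms which are themselves residues carrying a $z^{l-2}$-weight, hence visibly in $O_2$. The right-ideal inclusion follows from an analogous computation.

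Next I would prove associativity modulo $O_2$: $(a *_1 b) *_1 c \equiv a *_1 (b *_1 c) \pmod{O_2}$. The standard device is to interpret both sides as iterated residues of the triple matrix element $F(a, z_1; b, z_2; c, 0)$ against appropriate $(1+z_i)^{\wt}$ kernels. The three expansions of this matrix element in the regions $|z_1|>|z_2|>0$, $|z_2|>|z_1|>0$, and $|z_2|>|z_1-z_2|>0$ — which Section~2 guarantees converge to a common rational function — allow one to rewrite the difference of the two iterated residues as a contour integral that reduces to a $*_2$-residue, hence vanishes in $A(\mathfrak g)$. Verifying that $[\vac]$ is a two-sided identity is then immediate from $\omega_{\vac}(z)=\text{Id}$.

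Finally, for the anti-involution I would first check that $\phi$ stabilizes $O_2$ via a direct residue computation using the invariance formula $(w', \omega_g(z) w) = (\omega_{g'} w', w)$ recalled in Section~2, which rewrites $\phi(h *_2 \widetilde h)$ as a $*_2$-product of the images modulo $O_2$. The same mechanism yields the reversal identity $\phi(a *_1 b) \equiv \phi(b) *_1 \phi(a) \pmod{O_2}$, and $\phi^2 \equiv \text{id}$ follows by direct expansion of the exponential factors in the explicit formula for $\phi$. The main obstacle I anticipate is the associativity step: although classical in the single-variable vertex-algebra setting, here one must carefully track the multi-parameter $(z_1,\ldots,z_n)$ structure of $G$ and confirm that the residue manipulations remain valid under the rationality/convergence hypotheses of Section~2 (which are weaker than a full Jacobi identity) — in particular, the symmetry condition \eqref{shushu} must be invoked to collapse the relevant contour deformation onto the claimed $*_2$-residue.
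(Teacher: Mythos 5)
The paper does not actually prove this theorem: it is stated with the preface ``In \cite{Z, DLMZ} we find the following'' and no argument is supplied, so the only thing to compare your attempt against is the classical Zhu-algebra proof in those references. Your outline is precisely that argument --- (i) show $O_2=\mathrm{span}(h*_2\widetilde h)$ is a two-sided ideal for $*_1$ by residue reordering and binomial re-expansion, (ii) prove associativity modulo $O_2$ via the three expansions of the triple matrix element to a common rational function, (iii) check $\phi(O_2)\subset O_2$ and the reversal identity $\phi(a*_1b)\equiv\phi(b)*_1\phi(a)$ using the invariance of the pairing --- so the approach is the correct and expected one.

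Two caveats. First, what you have written is a roadmap, not a proof: every substantive computation (the commutator reordering in step (i), the contour manipulation in step (ii), the residue identity behind $\phi(O_2)\subset O_2$) is described but not carried out, and in Zhu's original setting each of these is a nontrivial page of calculation. Second, you correctly flag that the paper's hypotheses are weaker than a full Jacobi identity, but the difficulty is sharper than you state: the modes here are indexed by $l\in\C$ rather than $\Z$ (so $\Res_z$ of $\sum_{l\in\C}h_l z^{l-\kappa}$ is only a formal extraction of one coefficient, and $(1+z)^{\wt(h)}$ with $\wt(h)\in\C$ is a genuinely multivalued series), and the locality/associativity axioms of Section~2 are stated only for two insertion points, whereas associativity of $*_1$ modulo $O_2$ requires control of a three-point function. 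Neither of these obstructions is resolved by invoking condition \eqref{shushu}, which is a shuffle-type symmetry and not a substitute for a three-point associativity datum. If you intend this as a self-contained proof rather than a pointer to \cite{Z, DLMZ}, these are the places where it would currently fail to close.
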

In what follows, let us denote by $W=W_{(z_1, \ldots, z_n)} \subset G_{(z_1, \ldots, z_n)}$ 
 an $A(\mathfrak g)$-module.  
 The space of lowest weight vectors of $G$ is then defined as
$L(W)=\{g \in G,  w \in W|g_{ \wt(h) + m} w=0, h\in W, m \geq 0\}$.
With $W=\bigoplus_{\l\in\C} W_{(\l)}$,   
  each homogeneous subspace $L(W)_{(\l)}= L(W)\cap W_{(\l)}$  
of the natural grading $L(W)=\bigoplus_{\l\in\C} L(W)_{(\l)}$  
is finite dimensional \cite{DLMZ, Z}. 
It is easy to see the following 
\begin{lemma}\label{l6.a} 
Let $W$, $\widetilde{W}$ be two $A(\mathfrak g)$-modules
 with an $A(\mathfrak g)$-module homomorphism 
$\varphi:  W\to \widetilde{W}$.  
 Then $\varphi(L(W)) \subset L(\widetilde{W})$.  
In particular, if $\varphi$ is an isomorphism then 
$\varphi(L(W))=L(\widetilde{W})$.
\end{lemma}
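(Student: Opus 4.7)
The plan is to proceed directly from the defining property of $L(W)$ together with the fact that an $A(\mathfrak g)$-module homomorphism intertwines every mode $g_n$ appearing in the expansion of $\omega_g$. Concretely, I take an arbitrary $w \in L(W)$ and verify that $\varphi(w)$ satisfies the same annihilation condition that defines $L(\widetilde{W})$.

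First I would fix $w \in L(W)$, a homogeneous vector $h \in G$, and an integer $m \geq 0$, and consider the element $g_{\wt(h)+m} \varphi(w) \in \widetilde{W}$. Because $\varphi$ is an $A(\mathfrak g)$-module homomorphism, it commutes with the action of every coefficient of $\omega_g(z_1,\ldots,z_n)$ coming from $G$; in particular, $g_{\wt(h)+m} \varphi(w) = \varphi\left( g_{\wt(h)+m} w \right)$. Since $w$ lies in $L(W)$, the right-hand side vanishes, so $g_{\wt(h)+m} \varphi(w) = 0$ for all such $h$ and $m \geq 0$. Extending by linearity in $h$ gives $\varphi(w) \in L(\widetilde{W})$, which proves the inclusion $\varphi(L(W)) \subset L(\widetilde{W})$.

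For the second assertion, I would apply the same argument to the inverse homomorphism. If $\varphi$ is an isomorphism of $A(\mathfrak g)$-modules then $\varphi^{-1}: \widetilde{W} \to W$ is also an $A(\mathfrak g)$-module homomorphism, and the first part yields $\varphi^{-1}(L(\widetilde{W})) \subset L(W)$. Applying $\varphi$ to both sides gives $L(\widetilde{W}) \subset \varphi(L(W))$, and combined with the inclusion already established this produces the equality $\varphi(L(W)) = L(\widetilde{W})$.

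There is no real obstacle here; the only subtlety is bookkeeping the grading, since $L(W)$ decomposes as $\bigoplus_{\l \in \C} L(W)_{(\l)}$ and I want the argument to respect weights. This is handled automatically: a module homomorphism preserves the $K_G$-grading (because it commutes with every $g_n$ and in particular with the grading operator), so $\varphi$ restricts to a weight-preserving map $L(W)_{(\l)} \to L(\widetilde{W})_{(\l)}$, which is all that is needed.
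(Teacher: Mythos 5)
Your proof is correct, and it is essentially the only natural argument: the paper itself offers no proof of this lemma (it is introduced with ``It is easy to see the following''), so your write-up simply supplies the standard verification the author leaves implicit --- push the annihilation condition $g_{\wt(h)+m}w=0$ through $\varphi$, then apply the same to $\varphi^{-1}$ for the equality. The one point worth making explicit is that you read ``$A(\mathfrak g)$-module homomorphism'' as intertwining every mode $g_{\wt(h)+m}$ (not merely the $*_1$-action of $A(\mathfrak g)$ on $L(W)$); this is the reading the lemma requires and is consistent with the paper's category $\Oo_{A(\mathfrak g)}$, whose morphisms are weight-preserving, so your grading remark is likewise harmless.
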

An associative algebra $A(\mathfrak g)$ is called semisimple if it is  
  a direct sum of full matrix algebras.  
 Let $A(\mathfrak g )$ and $A(\widetilde {\mathfrak g} )$ be two associative algebras with
anti-involutions $\nu_{A(\mathfrak g)}$ and $\nu_{A(\widetilde {\mathfrak g} )}$ 
respectively. Then one has 
\begin{lemma}
 $A(\mathfrak g )\otimes_\C A(\widetilde {\mathfrak g})$
 is an associative algebra with anti-involution
$\nu_{A(\mathfrak g)}\otimes \nu_{A(\widetilde {\mathfrak g})}$.  
\end{lemma}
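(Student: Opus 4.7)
The plan is to verify the two required properties by direct computation on elementary tensors and then extend by bilinearity. First I would recall that the tensor product $A(\mathfrak g) \otimes_\C A(\widetilde{\mathfrak g})$ carries a canonical associative multiplication determined on elementary tensors by
\begin{equation*}
(a_1 \otimes b_1) \cdot (a_2 \otimes b_2) = (a_1 *_1 a_2) \otimes (b_1 *_1 b_2),
\end{equation*}
where $*_1$ denotes the associative product on each factor provided by Theorem~\ref{P3.10}. Associativity of this multiplication is inherited componentwise from the associativity of $*_1$ on $A(\mathfrak g)$ and on $A(\widetilde{\mathfrak g})$, so the first part of the statement requires no further work.

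Next I would define the candidate map $\nu := \nu_{A(\mathfrak g)} \otimes \nu_{A(\widetilde{\mathfrak g})}$ on elementary tensors by
\begin{equation*}
\nu(a \otimes b) = \nu_{A(\mathfrak g)}(a) \otimes \nu_{A(\widetilde{\mathfrak g})}(b),
\end{equation*}
and extend $\C$-linearly; this is well defined because each factor is $\C$-linear. To see that $\nu$ is an involution, I would square it: on an elementary tensor one gets $\nu^2(a\otimes b)=\nu_{A(\mathfrak g)}^2(a)\otimes\nu_{A(\widetilde{\mathfrak g})}^2(b)=a\otimes b$, using that $\nu_{A(\mathfrak g)}$ and $\nu_{A(\widetilde{\mathfrak g})}$ are themselves involutions by hypothesis and by Theorem~\ref{P3.10}.

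The only substantive step is the anti-multiplicativity, which I would verify by a short calculation. Applying $\nu$ to a product of elementary tensors gives
\begin{equation*}
\nu\bigl((a_1\otimes b_1)\cdot(a_2\otimes b_2)\bigr)=\nu_{A(\mathfrak g)}(a_1*_1 a_2)\otimes \nu_{A(\widetilde{\mathfrak g})}(b_1*_1 b_2),
\end{equation*}
and using the anti-involution property of each factor this equals $\bigl(\nu_{A(\mathfrak g)}(a_2)*_1\nu_{A(\mathfrak g)}(a_1)\bigr)\otimes\bigl(\nu_{A(\widetilde{\mathfrak g})}(b_2)*_1\nu_{A(\widetilde{\mathfrak g})}(b_1)\bigr)$, which is exactly $\nu(a_2\otimes b_2)\cdot\nu(a_1\otimes b_1)$. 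Extending bilinearly to general elements of the tensor product finishes the argument.

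The entire statement is a formal consequence of the two given anti-involution structures together with the functoriality of $\otimes_\C$, so there is no genuine obstacle; the only care required is to check that the formulas on elementary tensors extend unambiguously, which follows because $\nu_{A(\mathfrak g)}$ and $\nu_{A(\widetilde{\mathfrak g})}$ are $\C$-linear and the multiplication $\cdot$ on $A(\mathfrak g)\otimes_\C A(\widetilde{\mathfrak g})$ is defined by the universal property of the tensor product.
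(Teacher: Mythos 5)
Your proof is correct: you define the componentwise product on $A(\mathfrak g)\otimes_\C A(\widetilde{\mathfrak g})$, set $\nu=\nu_{A(\mathfrak g)}\otimes\nu_{A(\widetilde{\mathfrak g})}$ on elementary tensors, and verify well-definedness, involutivity, and anti-multiplicativity — exactly the routine check the statement calls for. The paper itself gives no proof of this lemma (it is stated as an immediate consequence of Theorem~\ref{P3.10}), so your argument simply supplies the standard verification that the paper leaves implicit.
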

  We denote by $W'$ the dual space to $W$ with respect to the form $(. , .)$.  
 The following lemma is obvious.   
\begin{lemma}
\label{l6.1} $W'$ is an $A(\mathfrak g)$-module such that 
 $(a \; m',m)=(m',\nu(a) \; m)$,  
 for $a\in A(\mathfrak g)$, $m'\in W'$, $m\in W$, and 
 $\nu$ is an anti-involution.  
\end{lemma}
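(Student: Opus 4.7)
The plan is to construct the $A(\mathfrak g)$-action on $W'$ by dualizing through the anti-involution $\nu$, and then to verify the module axioms by unwinding the definitions. Concretely, for $a\in A(\mathfrak g)$ and $m'\in W'$ I would \emph{define} the element $a\cdot m'\in W'$ to be the unique functional satisfying
\begin{equation*}
(a\cdot m', m) \;=\; (m', \nu(a)\cdot m) \qquad \text{for all } m\in W.
\end{equation*}
Well-definedness is immediate: $m\mapsto \nu(a)\cdot m$ is a linear endomorphism of $W$, so the composite with $m'$ is a linear functional on $W$, i.e.\ an element of $W'$ under the non-degenerate pairing $(\cdot,\cdot)$ supplied by the setup.

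Next I would verify the module axioms. Linearity of the action in $a$ and in $m'$ is automatic from linearity of $\nu$, bilinearity of the pairing, and linearity of the $A(\mathfrak g)$-action on $W$. The associativity axiom $(ab)\cdot m' = a\cdot(b\cdot m')$ is the content of the calculation
\begin{equation*}
((ab)\cdot m', m) = (m', \nu(ab)\cdot m) = (m', \nu(b)\nu(a)\cdot m) = (b\cdot m', \nu(a)\cdot m) = (a\cdot(b\cdot m'), m),
\end{equation*}
valid for every $m\in W$; by non-degeneracy of the pairing the two sides agree in $W'$. Here the crucial input is the anti-involution identity $\nu(ab)=\nu(b)\nu(a)$ guaranteed by Theorem~\ref{P3.10}, which precisely reverses the order of multiplication so as to be absorbed into the nested module action. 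For the unit axiom one needs $\nu(1)=1$, which follows from $\nu$ being an anti-involution: applying $\nu$ to $1\cdot 1=1$ yields $\nu(1)\nu(1)=\nu(1)$, and since $\nu$ is bijective (being an involution) $\nu(1)=1$, so $(1\cdot m',m)=(m',m)$ for all $m$, giving $1\cdot m'=m'$.

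The identity $(a\cdot m',m)=(m',\nu(a)\cdot m)$ in the statement is then just the defining formula, so nothing further is required. I do not anticipate a genuine obstacle here; the only mild subtlety is making sure the unit of $A(\mathfrak g)$ is fixed by $\nu$, which I would handle by the short idempotency argument above rather than by invoking the explicit map $\phi$ of Theorem~\ref{P3.10}. The argument is formally parallel to the standard construction of the contragredient module for an associative algebra with involution, and the earlier discussion of invariance of the pairing on $G'$ (in the contragredient-module paragraph) effectively sets up the same mechanism at the level of modes.
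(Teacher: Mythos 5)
The paper offers no proof of this lemma at all --- it is introduced with ``The following lemma is obvious'' --- so there is nothing to compare against; your write-up supplies exactly the standard contragredient construction that the author is implicitly invoking, and it is essentially correct. One small wrinkle: your unit argument is not quite airtight, since $\nu(1)=\nu(1)\nu(1)$ together with bijectivity of $\nu$ does not by itself force $\nu(1)=1$ (an idempotent need not be the identity, and applying $\nu$ to the idempotency relation gives a tautology). The clean fix is to note that $\nu(a)\nu(1)=\nu(1\cdot a)=\nu(a)$ and $\nu(1)\nu(a)=\nu(a\cdot 1)=\nu(a)$ for all $a$, so by surjectivity of $\nu$ the element $\nu(1)$ is a two-sided identity of $A(\mathfrak g)$ and hence equals $1$ by uniqueness of the unit. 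A second point worth a sentence in this setting: $W'$ is the \emph{graded} dual, so one should remark that the functional $m\mapsto(m',\nu(a)\,m)$ again lies in $\coprod_\lambda W_{(\lambda)}^*$; this follows from the finite-dimensionality of the homogeneous pieces $W_{(\lambda)}$ in the category $\Oo_{A(\mathfrak g)}$. Neither issue affects the substance of your argument.
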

For $w_i\in W$, $i=1$, $2$, and  
$a\in A(\mathfrak g)$, 
a form $(.,. )$ defined on $W$ is called 
invariant if $(a \; w_1, w_2) = (w_1,\nu(a) \;w_2)$. 
The category $\Oo_{A(\mathfrak g)}$ consists of $A(\mathfrak g)$-modules $W$ 
with $\l_i \in\C$, $1 \le i \le p$,  
such that 
$W=\bigoplus_{i=1, \atop n\geq 0}^p  W_{(\l_i)}$,   
 is a direct sum of finite dimensional $A(\mathfrak g)$-modules, and 
$\Hom_{A(\mathfrak g)}\left(W_{(\l)}, W_{(\mu)}\right)=0$,  
 if  $\mu \ne \l$. 
From \cite{DLMZ} we have 
\begin{theorem}
\label{P3.1}
For homogeneous $g\in G$ extended linearly to all $G$ and $W_{(0)} \ne 0$,  
 $L(W)$ is a left $A(\mathfrak g)$-module, and 
 the linear map $g \mapsto g_{(\wt(g)-1)}|L(W)$:   
$W \rightarrow \End (L(W))$,  
 induces a homomorphism from $W$ to $\End (L(W))$. 
For all $\l\in \C$,  
 $L(W)_{(\l)}$ is an finite-dimensional $A(\mathfrak g)$-module. 
\end{theorem}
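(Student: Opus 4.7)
The plan is to define the $A(\mathfrak g)$-action on $L(W)$ by
$\rho([g])\,w := g_{(\wt(g)-1)}\,w$
for homogeneous $g \in G$ and $w \in L(W)$, extended linearly in $g$, and then to verify (i) that the operator preserves $L(W)$, (ii) that $\rho$ vanishes on the subspace of relations $\mathrm{span}(h *_2 \widetilde h)$ and so descends to $A(\mathfrak g)$, (iii) that the descended map respects $*_1$, and finally (iv) the finite-dimensionality of $L(W)_{(\l)}$.

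For step (i), given $w \in L(W)$ and homogeneous $h \in G$, I would show that $h_{\wt(h)+m}\bigl(g_{(\wt(g)-1)}w\bigr)=0$ for $m \ge 0$ by commuting $h_{\wt(h)+m}$ past $g_{(\wt(g)-1)}$; the locality and associativity recalled in Section 2 turn this commutator into a finite sum of modes of the form $(h_j \cdot g)_{\wt(h)+m+\wt(g)-1-j}$ with $j\ge 0$, and each such mode either reduces to $g_{(\wt(g)-1)}$ applied after an operator that already annihilates $w$, or has a mode index strictly greater than $\wt(h_j\cdot g)-1$, whence it vanishes on $L(W)$ by the defining lowest-weight condition. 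For step (ii), expanding the $\Res_z$ in the definition of $*_2$ shows that $(h *_2 \widetilde h)_{(\wt-1)}$ is a sum of compositions $h_n \, \widetilde h_{(\wt(\widetilde h)-1)}$ with $n \ge \wt(h)$; since such $n$ meet the criterion for annihilating $L(\widetilde h \cdot W)$ after applying the first step, the whole sum kills $w \in L(W)$, so $\rho$ factors through $A(\mathfrak g) = G/(\mathrm{span}(h *_2 \widetilde h))^{\theta}$ with $\theta=1$.

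For step (iii), I would check the key identity
$(h *_1 \widetilde h)_{(\wt-1)}\,w \;=\; h_{(\wt(h)-1)}\bigl(\widetilde h_{(\wt(\widetilde h)-1)}\,w\bigr)$
on $L(W)$. The calculation is the standard Zhu-type manipulation: expand the residue defining $*_1$, use the locality/associativity of matrix elements from Section 2 to commute modes into normal-ordered form, and group the resulting sum by total mode index. Every term in which a mode sits on the right of $\widetilde h_{(\wt(\widetilde h)-1)}$ with index $\ge \wt(h)$ is killed on $L(W)$ by step (i), and the remaining terms collapse precisely to the composition on the right-hand side; together with Theorem \ref{P3.10} this yields that $\rho$ is an algebra homomorphism $A(\mathfrak g) \to \End(L(W))$, so the composition $W \to A(\mathfrak g) \to \End(L(W))$ is the advertised linear map.

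Step (iv) is short: $L(W)_{(\l)} = L(W)\cap W_{(\l)}$ is a subspace of $W_{(\l)}$, and the admissibility hypothesis recalled in Section 2 ensures $\dim W_{(\l)}<\infty$, hence $L(W)_{(\l)}$ is finite-dimensional and $A(\mathfrak g)$-stable because $\rho$ preserves the $K_G$-grading shifted by $\wt(g)-1-\wt(g)=-1$ in a way that the associative algebra $A(\mathfrak g)$ acts by grade-preserving endomorphisms on $L(W)$. The main obstacle will be the bookkeeping in steps (i)--(iii): one must carefully track which residues and which mode indices fall in the lowest-weight-annihilating range, as the whole argument rests on systematically using $h_{\wt(h)+m}w=0$ for $m\ge 0$ to discard all terms that are not manifestly on $L(W)$. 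This is precisely the module-theoretic counterpart of the associative algebra calculation behind Theorem \ref{P3.10}.
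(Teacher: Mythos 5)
The paper does not actually prove Theorem \ref{P3.1}: it is introduced with ``From \cite{DLMZ} we have'' and imported without argument, so there is no internal proof to compare against. What you have written is the standard Zhu-algebra argument (as in \cite{Z, DLM2, DLMZ}) transplanted to this setting, and your four-step skeleton is the right one. Writing $o(g):=g_{(\wt(g)-1)}$, step (i) is correct as stated: $[h_{\wt(h)+m},o(g)]$ is a finite sum of modes $(h_jg)_{\wt(h_jg)+m}$ with $m\ge 0$, which annihilate $L(W)$ by its defining condition, and step (iv) is immediate from $L(W)_{(\l)}\subset W_{(\l)}$, $\dim W_{(\l)}<\infty$, and the fact that $o(g)$ has degree zero. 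The one place your sketch is not literally accurate is step (ii): expanding the residue gives $h*_2\widetilde h=\sum_{j\ge 0}\binom{\wt(h)}{j}h_{j-2}\widetilde h$, and $o(h_{j-2}\widetilde h)$ is \emph{not} directly a sum of compositions $h_n\,o(\widetilde h)$ with $n\ge\wt(h)$; one must first invoke the iterate/associativity identity to rewrite each mode of $h_{j-2}\widetilde h$ as a double sum of products $h_{\ast}\widetilde h_{\ast}$ and $\widetilde h_{\ast}h_{\ast}$, and only then verify that in every surviving term the rightmost mode index lies in the annihilating range. The same identity is what drives step (iii), so the two computations should be carried out together, exactly as in Zhu's original Lemma on $o(a\circ b)$ and $o(a*b)$. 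These are bookkeeping corrections rather than a conceptual gap; with them supplied your proof coincides with the one in the cited sources. One caution beyond your control: the paper's definitions of $L(W)$ and of the action of modes on it are internally loose ($W$ is treated both as an $A(\mathfrak g)$-module and as a subspace of $G$ carrying modes $g_n$), so a fully rigorous version of steps (i)--(iii) must first fix a precise meaning of $g_{(\wt(g)-1)}|_{L(W)}$.
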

\section{Twisted $A(\mathfrak g)$-bundles}
In this Section we recall \cite{Zu} the construction of      
 associative algebra $A(\mathfrak g)$ twisted bundles.      
Here we show how to organize elements of the space 
$\Theta\left(n, k, W_{ (z_1, \ldots, z_n), \l }, X_{\a} \right)$ of 
prescribed rational functions 
into sections of a twisted $A(\mathfrak g)$-bundle on $X$.  
Let $\mathcal H$ be a subgroup of the group ${\rm Aut}_{(z_1, \ldots, z_n)}\; \Oo_X$  
of $n$ independent formal parameters $(z_1, \ldots, z_n)$ automorphisms on $X$. 
A non-empty set $\mathcal X$ is called a group $\mathfrak H$-torsor \cite{BZF}  
if it is endowed with a simply transitive right action of $\mathfrak H$. 
This means that for $\xi$, $\widetilde \xi \in \mathcal X$, there exists a unique 
$h \in \mathfrak H$ such that 
$\xi \cdot h = \widetilde \xi$, 
where for $h$, $\widetilde{h} \in \mathfrak H$ the right action is given by 
$\xi \cdot (h \cdot \widetilde{h}) = (\xi \cdot  h) \cdot \widetilde{h}$. 
This construction allows us to identify $\mathcal X$ with $\mathfrak H$ by sending 
 $\xi  \cdot h$ to $h$.  

Similar to \cite{BZF}, one sees that
 certain subspaces $W \subset G_{(z_1, \ldots, z_n)}$ are $\mathcal H$-modules. 
For $\mathcal H$-torsor $X_{\a}$ of its module $W$ and  $X_\a$, 
let us associate the $X_{\a}$-twist of $W$  
$\E_{X_\a} = W \; 
{ {}_\bigtimes \atop      {}^{  \mathcal H      }     }
X_{\a}   
=  W  \times  X_\a/  \left\{ (w, a \cdot \xi) \sim  (aw, \xi) \right\}$,
for $\xi \in X_\a$, $a \in \mathcal H$, and $w \in W$.    
The isomorphisms  
$i_{(z_1, \ldots, z_n), X_\a }: W  \; \widetilde{\to} \;  \E_{X_\a}$  of $W$ 
define a representation  of $\mathcal H$ on $W$. 
  Then $\E_{X_\a}$ is canonically identified with the twist of $W$ by the 
 $\mathcal H$-torsor of $X_\a$.    
 Elements of $\Theta\left(n, k, W_{ (z_1, \ldots, z_n) }, X_\a\right)$ form   
 sections $F(x_1, \ldots, x_n)$.   
The principal bundle for the group 
$\mathcal H$ on $X$ provides the construction 
of local parts of a twisted $A(\mathfrak g)$-bundle. 
Let ${\it Aut}_{X_\a}$ be the space of all sets of local formal parameters on $X_\a$.   
We define the $\mathcal H$-twist of $W$  
$\E_{(z_1, \ldots, z_n)}=  W  \;  { {}_\bigtimes \atop      {}^{  \mathcal H      }     }
  \;{\it Aut}_{X_\a}$.      

Let us assume that a $\C$-grading on $W$ is induced 
by a $K_0$-graidng defined on $G$  
 Let ${\mathfrak G} \subset {\rm Aut}(G)$ be a $W$-grading preserving 
subgroup.   
 Denote by $\Oo_{\mathfrak G, A(\mathfrak g)}$ a 
subcategory of $\Oo_{A(\mathfrak g)}$ consisting of $A(\mathfrak g)$-modules 
 $W$ such that  
$\mathfrak G$  
acts on $W$ as automorphisms.    
By using ideas of \cite{BZF}, 
let us define the local part $\E(W_{ (z_1, \ldots, z_n), (\lambda) })$ 
of a tweisted $A(\mathfrak g)$-bundle via 
 matrix elements 
$F(x_1, \ldots, x_n)$ that belong to 
$\Theta \left(n, k, W_{ (z_1, \ldots, z_n), (\lambda) } \right)$  for all $n$, $k \ge 0$. 
 on a finite part $\left\{ X_\alpha, \alpha \in I_0 \right\}$      
 of a covering $\left\{X_\alpha \right\}$ of $X$.         
By using the properties of prescribed rational functions
we form a principal $\mathcal H$-bundle,  which is 
  a fiber bundle $\E(W_{  (z_1, \ldots, z_n), (\l) })$ 
with the fiber space 
provided by elements $f(x_1, \ldots, x_n) \in W$, and 
defined by trivializations 
$i_{ (z_1, \ldots, z_n) }: F  (x_1, \ldots, x_n)= 
  (\theta, f(x_1, \ldots, x_n)  )   \to X_\a$,    
 with a continuous free and transitive  $F(x_1, \ldots, x_n)$-preserving right action 
 $F(x_1, \ldots, x_n) \times \mathcal H \to F(x_1, \ldots, x_n)$.    
The projection ${\it Aut}_{X_\a} \rightarrow X$ is a principal 
$\mathcal H$-bundle similar to \cite{BZF}.  
 $\mathcal H$-torsor $ {\it Aut}_{X_\a}$ is 
the fiber of this bundle over $X_\a$.   
For a finite-dimensional $\mathcal H$-module $W_{i_{(z_1, \ldots, z_n)}, \lambda}$, let  
$\E( W_{ (z_1, \ldots, z_n),  \l }) = \bigoplus_{n, k \ge 0} W_{ i_{(z_1, \ldots, z_n)}, \lambda}  \;  
{ {}_\bigtimes \atop      {}^{  \mathcal H      }     }
\; {\it Aut}_{X_\a}$,    
be the fiber bundle associated to $W_{i_{(z_1, \ldots, z_n)}, \lambda} $, ${\it Aut}_{X_\a}$, 
 with sections provided by elements of $\Theta(n, k, W, X_\a)$, for $n$, $k \ge 0$. 
On $X$ we can choose  $\left\{X_\a\right\}$  
   such that the bundle $\E(W_{ (z_1, \ldots, z_n), \l })$ over $X_\a$ is  
$X_\a \times F(x_1, \ldots, x_n)$.  
The map $\E(W_{ (z_1, \ldots, z_n), \l }): \C^n  \rightarrow X$ 
is the fiber bundle $\E(W_{ (z_1, \ldots, z_n), \l })$  
with fiber $f(x_1, \ldots, x_n)$, the total space $\C^n$  
of $\E(W_{  (z_1, \ldots, z_n), \l })$, and $X$ is its base space.  
 For every $X_\a$ of $X$ 
 $i_{ (z_1, \ldots, z_n) }^{-1}$ is homeomorphic to $X_a \times \C^n$. 
For
$ f( (x_1, \ldots, x_n) ): 
 i_{   (z_1, \ldots, z_n)}^{-1} \rightarrow  X_\a  \times \C^n$,  
that 
$\mathcal P \circ  f (  (x_1, \ldots, x_n)  )      
= i_{   (z_1, \ldots, z_n) } \circ 
 {   i^{-1}_{ (z_1, \ldots, z_n) }  }
  \left( X_\a   \right)  $,  
where $\mathcal P$ is the projection map on $ X_\a$.    

A twisted fiber bundle $\E$  over $X$ associated  to 
$A(\mathfrak g)$-module with the fiber   
$W \in \Oo_{\mathfrak G, A(\mathfrak g)}$ and 
 $\Theta\left(n, k, W, X \right)$, $n$, $k \ge 0$-valued sections  
  is a direct sum of vector bundles  
$\E=\bigoplus_{\l \in \C} \E(W_{ (z_1, \ldots, z_n), \lambda} )$,  
  such that all transition functions are 
$A(\mathfrak g)$-module isomorphisms. 
It includes a family of continuous 
isomorphisms 
 $H_\a =\left\{ H_{\a, \l}, \l\in \C\right\}$,  
 $H_{\a, \lambda} : \E(W_{(\lambda}) )|_{X_\a } \to W_{ i_{(z_1, \ldots, z_n)}, \l} \; 
{ {}_\bigtimes \atop      {}^{  \mathfrak G     }     } \; 
 {\it Aut}_{X_\a}$,     
 of fiber bundles such that for transition functions 
$g_{  \a  \b, \l}  
=  H_{\a, \lambda} *_2 H_{\b, \lambda}^{-1}$,   
for all $\lambda \in \C$. 
 Then 
$g_{\a \b}(x)= \left(g_{\a\b, \l}(\xi) \right): W \to W$,     
 are $A(\mathfrak g)$-module isomorphisms   
for any $\xi \in  X_\a \bigcap  X_\b$ where   
 the transition functions 
$g_{\a, \b}(x)$ are $G$-valued.     

Now we recall 
 properties of twisted $A(\mathfrak g)$-bundles \cite{Zu}.  
 For $A(\mathfrak g)=\C$, the twisted $A(\mathfrak g)$-bundle  
is a classical complex vector bundle over $X$.  
  We have  
\begin{lemma}
\label{tufta}
For $\E_1$, $\E_2$ 
  $A(\mathfrak g_1)$- and $A(\mathfrak g_2)$-bundles 
$\E_1\otimes \E_2$ is 
$A(\mathfrak g_1) \otimes_{\C} A(\mathfrak g_2)$-bundle over $X$. 
\end{lemma}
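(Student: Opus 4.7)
The plan is to build the bundle $\E_1 \otimes \E_2$ fiber-wise from its constituents and then verify the three pieces of structure that the definition of a twisted $A(\mathfrak{g})$-bundle in the previous section demands: a module structure on the typical fiber, a compatible direct-sum decomposition indexed by $\C$-weights, and transition functions which are module isomorphisms. I would first pass to a common open refinement $\{X_\alpha\}$ of the coverings used to trivialize $\E_1$ and $\E_2$, so that both come equipped with families of isomorphisms $H^i_\alpha = \{H^i_{\alpha,\lambda}\}$, $i=1,2$, over the same $X_\alpha$. Setting $W = W^1 \otimes_{\C} W^2$ and using the functor $\mathcal{O}_{\mathfrak g_1}\times \mathcal{O}_{\mathfrak g_2}\to \mathcal{O}_{\mathfrak g_1\otimes \mathfrak g_2}$ recalled in Section~3, $W$ carries an $A(\mathfrak g_1)\otimes_\C A(\mathfrak g_2)$-module structure; by the previous lemma the tensor algebra is indeed associative with anti-involution $\nu_{A(\mathfrak g_1)}\otimes \nu_{A(\mathfrak g_2)}$.

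Next I would define the candidate bundle $\E_1 \otimes \E_2$ locally as the external tensor product of the trivializations, i.e.\ on each $X_\alpha$
\[
(\E_1\otimes \E_2)|_{X_\alpha} \;=\;
\bigoplus_{\lambda,\mu \in \C}
\bigl( W^1_{i_{(z)},\lambda} \otimes W^2_{i_{(z)},\mu} \bigr)
\; { {}_\bigtimes \atop {}^{\mathcal H} } \; \mathit{Aut}_{X_\alpha},
\]
with trivializations $H_{\alpha,(\lambda,\mu)} = H^1_{\alpha,\lambda}\otimes H^2_{\alpha,\mu}$. The transition functions are then $g_{\alpha\beta,(\lambda,\mu)} = g^1_{\alpha\beta,\lambda}\otimes g^2_{\alpha\beta,\mu}$, which makes sense because each $g^i_{\alpha\beta}$ is $G$-valued and the associative product $*_2$ behaves compatibly under tensor product. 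Regrouping the index set $\C\times \C$ by the total weight $\nu = \lambda+\mu$ recovers a $\C$-graded decomposition of the required form $\E_1\otimes\E_2 = \bigoplus_{\nu\in\C}\E(W_{(z),\nu})$, with each summand finite-dimensional because of the finiteness hypothesis on the homogeneous pieces of $W^1$ and $W^2$.

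I would then verify that the transition functions $g^1_{\alpha\beta}\otimes g^2_{\alpha\beta}$ are $A(\mathfrak g_1)\otimes_{\C} A(\mathfrak g_2)$-module isomorphisms: for elementary tensors $a_1\otimes a_2$ and $w_1\otimes w_2$, linearity on each factor plus the hypothesis that $g^i_{\alpha\beta}$ intertwines the $A(\mathfrak g_i)$-action shows
\[
(g^1_{\alpha\beta}\otimes g^2_{\alpha\beta})\bigl((a_1\otimes a_2)(w_1\otimes w_2)\bigr)
\;=\;
(a_1\otimes a_2)\bigl((g^1_{\alpha\beta}\otimes g^2_{\alpha\beta})(w_1\otimes w_2)\bigr),
\]
and invertibility is inherited factor-wise. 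For sections, I would show that the product of two elements of $\Theta(n,k,W^i,X_\alpha)$ gives an element of $\Theta(n,k,W^1\otimes W^2,X_\alpha)$ by checking the $T_G$-derivative property \eqref{cond1}, the $K_G$-property \eqref{loconj}, and the locality/associativity of matrix elements; each of these axioms is bilinear in the two factors, so they pass through the tensor product termwise, with the permutation axiom \eqref{shushu} respected by taking the combined action on both tensor slots.

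The main obstacle I expect is not the algebraic bookkeeping but the analytic compatibility of the sections: the domains of absolute convergence and the permitted pole orders $\beta(g_{l',i},g_{l'',j})$ are formulated individually for each factor, and one must show that the prescribed rational functions arising as matrix elements of $\omega_{g_1\otimes g_2}$ on $W^1\otimes W^2$ enjoy a pole bound of the form $\beta_1+\beta_2$ on the common configuration space. Once this additivity of prescribed analytic behavior is established — using that the common rational function in the locality/associativity axiom for the tensor product is simply the product of the two individual rational functions — the identification $\E_1\otimes\E_2 \in \mathit{Bund}(A(\mathfrak g_1)\otimes_\C A(\mathfrak g_2),X)$ is complete.
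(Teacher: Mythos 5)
The paper itself offers no proof of Lemma~\ref{tufta}: it is stated as a recalled property of twisted $A(\mathfrak g)$-bundles with a citation to [Zu], so there is no argument in the text to compare yours against line by line. That said, your construction is the natural one and is consistent with the machinery the paper sets up: passing to a common refinement of the two trivializing coverings, tensoring the fibers $W^1\otimes_\C W^2$ with the module structure supplied by the functor $\Oo_{\mathfrak g}\times\Oo_{\mathfrak g'}\to\Oo_{\mathfrak g\otimes\mathfrak g'}$ and the lemma that $A(\mathfrak g_1)\otimes_\C A(\mathfrak g_2)$ is associative with anti-involution $\nu_1\otimes\nu_2$, tensoring the transition functions $g^1_{\alpha\beta}\otimes g^2_{\alpha\beta}$, and checking the intertwining on elementary tensors. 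This is exactly the skeleton such a proof must have.

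Two points remain genuinely unfinished rather than merely routine. First, your regrouping of the bigrading $\C\times\C$ by total weight $\nu=\lambda+\mu$ needs the truncation conditions ($\dim G_{(\lambda)}<\infty$ and $G_{(s+\lambda)}=0$ for sufficiently small integers $s$, together with $P(W)\subset\bigcup_i D(\lambda_i)$) to guarantee that only finitely many pairs $(\lambda,\mu)$ contribute to each $\nu$; without invoking these, finite-dimensionality of the regrouped homogeneous pieces is not automatic. Second, and more seriously, the step you yourself flag as the main obstacle --- that sections in $\Theta(n,k,W^1,X_\alpha)\otimes\Theta(n,k,W^2,X_\alpha)$ land in $\Theta(n,k,W^1\otimes W^2,X_\alpha)$ with pole bounds of the form $\beta_1+\beta_2$ and with the convergence domains of the prescribed analytic behavior preserved --- is asserted, not proved. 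The claim that the common rational limit for the tensor product is ``simply the product of the two individual rational functions'' presumes that the matrix-element pairing on $W^1\otimes W^2$ factors through the two pairings, which should be stated and justified from the definition of the pairing on the tensor module. Until that additivity of prescribed analytic behavior is actually established, the proof is a correct outline with its central analytic lemma still open.
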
 
For $\E$, let us introduce $\E'=\oplus_{\l\in \C}(\E(W_{(\lambda)} ))^*$ which is, due to 
properties of the non-degenerate bilinear pairing $(., .)$, 
 is also a   $A(\mathfrak g)$-bundle. 
 For two bundles $\E$ and $\E'$ on $X$, 
a map
$\eta: \E \to \E'$, 
 is called a bundle morphism    
if there exist a 
family of continuous morphisms of fiber bundles 
$\eta_\l:  \E( W_{(\lambda)} )  \to \E'( W_{(\lambda)} )$, 
 such that with $\eta = \left(   \eta_\l  \right)$, for all $\l\in \C$, and 
 $\eta_\l: \E \to \E'$,  
is an $A(\mathfrak g)$-module morphism.    
From \cite{DLMZ, Zu} we find 
\begin{lemma}
\label{l3.1}
$\E \bigoplus \E'$ is a twisted $A(\mathfrak g)$-bundle,  
 endowed with non-degenerate symmetric
invariant bilinear pairing  
$\left(g_{\a\b}^*(\xi)\theta, g_{\a\b}(\xi) u \right)=\left(\theta, u \right)$,   
 independent of $g_{\a\b}$ for all $\a$, $\b \in I$, $\xi \in X_\a\cap X_\b$,
$\xi \in G$, $\theta \in G'$, and    
induced by the natural bilinear pairing on
$G \oplus  G'$.   
\end{lemma}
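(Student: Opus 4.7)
The plan is to build the statement out of three separate facts: (i) that the direct sum $\E\oplus \E'$ inherits the structure of a twisted $A(\mathfrak g)$-bundle from its summands; (ii) that the fiberwise pairing pieced together from the pairing on $G\oplus G'$ recalled in the contragredient-module discussion descends to a well-defined global pairing on $\E\oplus \E'$; and (iii) that this pairing is symmetric, non-degenerate, and transition-function independent.

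First I would observe that $\E$ is a twisted $A(\mathfrak g)$-bundle by construction, and that $\E'=\oplus_{\lambda\in\C}(\E(W_{(\lambda)}))^*$ was already shown (in the paragraph preceding the lemma) to be a twisted $A(\mathfrak g)$-bundle via the non-degenerate pairing $(\,.\,,\,.\,)$. Since a $\lambda$-by-$\lambda$ direct sum of $A(\mathfrak g)$-modules is again an $A(\mathfrak g)$-module, and since transition functions of a direct sum are simply block-diagonal copies of those of the summands, $\E\oplus \E'=\bigoplus_{\lambda\in\C}\bigl(\E(W_{(\lambda)})\oplus \E(W_{(\lambda)})^*\bigr)$ inherits local trivializations $H_{\alpha,\lambda}\oplus H^{*\,-1}_{\alpha,\lambda}$ whose cocycle $g_{\alpha\beta}\oplus g_{\alpha\beta}^{*\,-1}$ consists of $A(\mathfrak g)$-module isomorphisms. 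This puts the direct sum in the framework of Section~4, and in particular it belongs to $\Oo_{\mathfrak G,A(\mathfrak g)}$ via fiberwise sums.

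Next I would define the bilinear pairing on $\E\oplus \E'$ fiberwise. Over a trivializing patch $X_\alpha$ the fiber is canonically identified with $G\oplus G'$ (after assembling the $\lambda$-pieces), and on $G\oplus G'$ the formula $(u+u',w+w')=(u,w')+(w,u')$ recalled above is non-degenerate and symmetric. Transport this pairing from fibers back to $\E\oplus \E'$ using the trivializations $H_\alpha$ and $H_{\alpha}^{*\,-1}$. The crucial check is that the result does not depend on the choice of trivialization: if $\xi\in X_\alpha\cap X_\beta$ and we compare the two presentations of the fiber, they differ by the cocycle pair $(g_{\alpha\beta}(\xi),g_{\alpha\beta}^*(\xi))$, and by the duality between a module and its contragredient module one has
\[
\bigl(g_{\alpha\beta}^*(\xi)\theta,\;g_{\alpha\beta}(\xi)u\bigr)=\bigl(\theta,u\bigr),
\]
for all $u\in G$, $\theta\in G'$; this is exactly the invariance relation appearing in the statement, and it is the definitional property of the adjoint transition maps making up $\E'$. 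Therefore the two local presentations yield the same numerical value of the pairing, so the pairing on $\E\oplus \E'$ is globally well-defined and independent of $g_{\alpha\beta}$.

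Finally, symmetry follows at once from the corresponding symmetry of $(\cdot,\cdot)$ on $G\oplus G'$, and non-degeneracy follows fiberwise: if $\xi\in \E\oplus \E'|_{x}$ pairs trivially with everything, then in any local trivialization its representative in $G\oplus G'$ pairs trivially, which forces it to vanish by non-degeneracy of the pairing on $G\oplus G'$ recalled earlier in the paper. Invariance under the $A(\mathfrak g)$-action follows along the same lines using the anti-involution $\nu$ from Lemma~\ref{l6.1}. The main technical point, and the only place where care is really needed, is verifying the invariance identity displayed above at the level of transition cocycles: one must check that the $A(\mathfrak g)$-module isomorphism property of $g_{\alpha\beta}$ transfers to an adjoint action $g_{\alpha\beta}^*$ on fibers of $\E'$ that is compatible with the pairing, which is a direct consequence of the contragredient construction applied patchwise and then reassembled over $X$.
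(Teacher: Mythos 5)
Your proposal is essentially correct, but note that the paper itself supplies no proof of Lemma~\ref{l3.1}: it is quoted from the references \cite{DLMZ, Zu} with the phrase ``From \cite{DLMZ, Zu} we find,'' so there is no in-paper argument to compare against. Your reconstruction is the natural one and uses exactly the ingredients the paper makes available: the block-diagonal transition cocycle on the direct sum, the pairing $(u+u',w+w')=(u,w')+(w,u')$ on $G\oplus G'$ from the contragredient-module discussion, and the anti-involution $\nu$ of Lemma~\ref{l6.1} for invariance. The one point to tighten is the passage from ``adjoint'' to the displayed identity: if $g_{\a\b}^*$ were literally the adjoint of $g_{\a\b}$ with respect to $(\cdot,\cdot)$, one would get $(g_{\a\b}^*\theta, g_{\a\b}u)=(\theta, g_{\a\b}g_{\a\b}u)$, not $(\theta,u)$; the identity in the lemma requires $g_{\a\b}^*$ to be the \emph{contragredient} (inverse-adjoint) cocycle, which is indeed what you write when you introduce $H^{*\,-1}_{\alpha,\lambda}$ and $g_{\alpha\beta}^{*\,-1}$, but you should keep that convention consistently rather than switching back to $g_{\alpha\beta}^*$ in the invariance computation. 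With that notational repair, your argument establishes the statement and, unlike the paper, actually records why the pairing is independent of the choice of trivialization.
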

A twisted $A(\mathfrak g)$-bundle $\E$ is called trivial 
if there exists an $A(\mathfrak g)$-bundle 
isomorphism
 $\varphi: \E \to W \times X$,   
here $W \times X$ is the 
$A(\mathfrak g)$-bundle on $X$ with $W$ as fibers.  
 For any $A(\mathfrak g)$-module $M \in \Oo_{A(\mathfrak g)}$ we 
denote the trivial $A(\mathfrak g)$-bundle on $X$ by $\M$. 
Let us now understand how the notion of a trivial bundle 
is determined by the definition of a twisted $A(\mathfrak g)$-bundle. 
Let $W \in \Oo_{A(\mathfrak g)}$.   
A subgroup $\mathcal H \subset {\rm Aut}_{(z_1, \ldots, z_n)} \Oo_X$ 
determines a trivial bundle $W \times X$ 
if $\mathcal H$ satisfies the following properties. 
$W$ is a $\mathcal H$-module. 
The $\mathcal H$ twist $\E(X_\alpha)$ preserves $W$  
under ${\rm Aut}_{(z_1, \ldots, z_n)} \Oo_{X_\alpha}$-actions by isomorphisms $i_{(z_1, \ldots, z_n)}$. 
The local part of the bundle $W \times X$ is given by the principle $\mathcal H$-bundle 
with $\mathcal H$-actions preserving sections $F(x_1, \ldots, x_n) \to X_\alpha$.  
The trivial bundle $W \times X$ is the direct sum of trivial bundles 
with transition functions $g$ given by $W$-preserving
 triansition functions of $\left\{ X_\alpha \right\}$, $\alpha \in I$. 
From \cite{Zu} we have 
\begin{proposition}
\label{p3.4} 
For any twisted $A(\mathfrak g)$-bundle $\E$,   
there exists a twisted 
$A(\mathfrak g)$-bundle $\widetilde {\E}$ such that $\E \bigoplus \widetilde{\E}$ 
 is a trivial twisted $A(\mathfrak g)$-bundle 
with a $A(\mathfrak g)$-module $W$-preserving action 
of a subgroup of ${\rm Aut}_{(z_1, \ldots, z_n)} \; \Oo_X$.  
\end{proposition}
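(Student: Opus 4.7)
The strategy I would follow mirrors the classical Swan-style argument (every vector bundle on a compact space is a summand of a trivial bundle), adapted to the twisted $A(\mathfrak g)$-bundle setting using the invariant bilinear pairing supplied by Lemma \ref{l3.1}.

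First, since $X$ is compact, I would extract a finite subcover $\{X_{\alpha}\}_{\alpha\in I_{0}}$ from the open covering $X_{(\alpha)}$ on which the local trivializations
\[
H_{\alpha,\lambda}:\E(W_{(\lambda)})|_{X_{\alpha}} \to W_{i_{(z_1,\ldots,z_n)},\lambda}\;{{}_{\bigtimes}\atop{}^{\mathfrak{G}}}\;{\it Aut}_{X_{\alpha}}
\]
from the definition of a twisted $A(\mathfrak g)$-bundle are available for every graded piece $\lambda\in \C$. I would next choose a continuous partition of unity $\{\rho_{\alpha}\}_{\alpha\in I_{0}}$ subordinate to $\{X_{\alpha}\}$, and use it together with the trivializations to build, for each fixed $\lambda$, a fiberwise $A(\mathfrak g)$-module monomorphism from $\E(W_{(\lambda)})$ into a finite direct sum $\bigoplus_{\alpha\in I_{0}} W_{i_{(z_1,\ldots,z_n)},\lambda}\times X$. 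Summing over $\lambda$ produces an embedding of $\E$ into a trivial $A(\mathfrak g)$-bundle $\mathcal{W}\times X$, where $\mathcal{W}=\bigoplus_{\alpha\in I_{0}} W_{i_{(z_1,\ldots,z_n)}}$ lies in $\Oo_{\mathfrak{G},A(\mathfrak g)}$ because the categories $\Oo_{A(\mathfrak g)}$ and $\Oo_{\mathfrak G,A(\mathfrak g)}$ are closed under finite direct sums.

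Next, I would invoke Lemma \ref{l3.1}: the bundle $\mathcal{W}\times X$ carries a fiberwise non-degenerate symmetric invariant bilinear pairing, independent of transition functions, coming from the pairing on $G\oplus G'$. Setting
\[
\widetilde{\E}\;=\;(\E)^{\perp}\subset \mathcal{W}\times X,
\]
the orthogonal complement fiberwise with respect to this pairing, gives a candidate complementary bundle. Non-degeneracy of the form together with the fact that the embedding of $\E$ is fiberwise split (by the finite partition-of-unity construction) yields a continuous splitting $\mathcal{W}\times X \cong \E\oplus \widetilde{\E}$. Because the pairing is $A(\mathfrak g)$-invariant under the anti-involution $\nu$ (Theorem \ref{P3.10}, Lemma \ref{l6.1}), the subspace $\widetilde{\E}$ is preserved by the transition functions $g_{\alpha\beta,\lambda}=H_{\alpha,\lambda}*_{2}H_{\beta,\lambda}^{-1}$, so its induced transition functions are again $A(\mathfrak g)$-module isomorphisms graded over $\lambda\in \C$. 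This exhibits $\widetilde{\E}$ as a twisted $A(\mathfrak g)$-bundle in the sense of the definition.

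Finally, I would verify the trivialization condition stated in the proposition: the $\mathcal H$-action on $\mathcal W\times X$ inherited from the one on each $W_{i_{(z_1,\ldots,z_n)}}$ preserves $\mathcal W$ because each summand is an $\mathcal H$-module, and the action by isomorphisms $i_{(z_1,\ldots,z_n)}$ preserves the direct sum decomposition. Hence the trivial bundle criterion from the paragraph preceding Proposition \ref{p3.4} is satisfied for $\E\oplus\widetilde{\E}$. The main obstacle I anticipate is twofold: first, making the fiberwise orthogonal complement vary continuously in $\xi\in X$ in a way that respects the $\C$-grading of $\mathcal W$ (handled by splitting the construction into graded summands, each of which is finite dimensional by the ordinariness assumption on $G$); second, ensuring that the complementary transition data $g_{\alpha\beta}|_{\widetilde{\E}}$ is expressible via $*_{1}$ and $*_{2}$ on $A(\mathfrak g)$, which requires the invariance of $(\cdot,\cdot)$ under $\nu$ and the fact that $\mathcal H$ acts on $W'$ through $\nu$, both of which were established earlier in the excerpt.
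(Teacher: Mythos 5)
Your proposal follows essentially the same route as the paper's own construction (sketched in Section \ref{s4} and attributed to \cite{Zu}): embed $\E$ by an injective, bilinear-form-preserving homomorphism $\psi$ into a trivial bundle $W^{\oplus s}\times X$ and take $\widetilde{\E}=\psi(\E)^{\dagger}$, the complement with respect to the non-degenerate invariant pairing of Lemma \ref{l3.1}. Your version merely makes explicit the compactness, finite-subcover, and partition-of-unity steps used to produce $\psi$, together with the verification that the complement inherits $A(\mathfrak g)$-module transition functions, all of which the paper leaves implicit.
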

 Twisted $A(\mathfrak g)$-bundles possess  
 the following homotopy-stability property: 
\begin{proposition} 
\label{l4.1}  
For a homotopy $\tau_t: \widetilde {X}\to X$, 
 $0\leq t \leq 1$, of a compact Hausdorff space $\widetilde{X}$, 
 and a twisted $A(\mathfrak g)$-bundle $\E$ over $X$, 
 $\tau^*_0 (\E) \simeq \tau^*_1 (\E)$. 
\end{proposition}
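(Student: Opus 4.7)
The plan is to reduce the proposition to the homotopy invariance of the principal $\mathcal H$-bundle underlying the local parts of $\E$, which then follows from classical results on principal bundles with compact Hausdorff base. First I would form the pullback $\tau^*\E$ on the cylinder $\widetilde X \times [0,1]$ and observe that the construction of Section 4 is functorial in the base: pulling back the transition functions $g_{\a\b}(\xi)$ gives again $A(\mathfrak g)$-module isomorphisms, and the principal $\mathcal H$-bundle ${\it Aut}_{X_\alpha} \to X$ pulls back to a principal $\mathcal H$-bundle on $\widetilde X \times [0,1]$. Hence $\tau^*\E$ is a twisted $A(\mathfrak g)$-bundle whose restriction to $\widetilde X \times \{t\}$ equals $\tau_t^*(\E)$, and the proposition reduces to the cylinder statement: for any twisted $A(\mathfrak g)$-bundle $\mathcal F$ on $Y \times [0,1]$ with $Y$ compact Hausdorff, one has $\mathcal F|_{Y \times \{0\}} \simeq \mathcal F|_{Y \times \{1\}}$.

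To establish the cylinder statement I would combine compactness of $Y$ with the Lebesgue number lemma applied to a trivializing cover of $\mathcal F$, obtaining a finite open cover $\{U_\alpha\}$ of $Y$ and a partition $0 = t_0 < t_1 < \cdots < t_N = 1$ such that $\mathcal F$ is trivial over every slab $U_\alpha \times [t_i, t_{i+1}]$. By transitivity of isomorphism it then suffices to produce an $A(\mathfrak g)$-bundle isomorphism $\mathcal F|_{Y \times \{t_i\}} \simeq \mathcal F|_{Y \times \{t_{i+1}\}}$ for each $i$ and to concatenate. On a single slab the trivializations yield local isomorphisms $\phi_\alpha: \mathcal F|_{U_\alpha \times \{t_i\}} \to \mathcal F|_{U_\alpha \times \{t_{i+1}\}}$, and I would glue these into a global morphism by working one graded component $\E(W_{(z_1,\ldots,z_n),(\lambda)})$ at a time at the level of the principal $\mathcal H$-bundle ${\it Aut}_{X_\alpha}$, invoking the classical covering homotopy property for principal bundles with compact Hausdorff base and then transporting the resulting principal-bundle isomorphism into the associated bundle via the construction of Section 4. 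As a safety net, should the direct gluing fail, Proposition \ref{p3.4} allows us to replace $\mathcal F$ by a summand of a trivial twisted $A(\mathfrak g)$-bundle, where the assertion is immediate, and then to extract the desired isomorphism via the direct-sum structure of Lemma \ref{l3.1}.

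The main obstacle lies in the final transfer step. Classical partition-of-unity averaging of bundle morphisms works for finite-dimensional vector bundles because invertibility is an open condition in a finite-dimensional matrix algebra, but here the fibers are infinite-dimensional graded $A(\mathfrak g)$-modules in $\Oo_{\mathfrak G, A(\mathfrak g)}$ and naive averaging need not respect the $A(\mathfrak g)$-module action. Checking that the equivariant principal $\mathcal H$-bundle isomorphism descends through the equivalence relation $(w, a \cdot \xi) \sim (aw, \xi)$ to an $A(\mathfrak g)$-module isomorphism on the associated bundle, uniformly in the grading index $\lambda$ and independently of the choice of trivializing cover and subordinate partition of unity, is the delicate but ultimately routine verification. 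Once this transfer is established on a single slab, iteration over $i = 0, \ldots, N-1$ yields the global isomorphism $\tau_0^*(\E) \simeq \tau_1^*(\E)$ claimed by the proposition.
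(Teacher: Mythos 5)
The paper contains no proof of Proposition~\ref{l4.1}: it is stated among the properties recalled from \cite{Zu}, so there is no in-text argument to compare yours against. Your proposal is the classical cylinder argument (pull back to $\widetilde X\times[0,1]$, use compactness and the Lebesgue number lemma to get a finite slab decomposition, trivialize over slabs, and glue), which is almost certainly the intended route, and your decision to carry out the gluing at the level of the principal $\mathcal H$-bundle via the covering homotopy property --- rather than by partition-of-unity averaging of fiber maps, which indeed breaks down for infinite-dimensional graded fibers --- is the right adaptation; it does rest on $\mathcal H$ being a topological group acting continuously on each graded component, which the paper's construction assumes implicitly but never verifies.

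The one step that does not work as stated is your ``safety net.'' Proposition~\ref{p3.4} gives a complement $\widetilde{\E}$ with $\E\oplus\widetilde{\E}$ trivial; restricting a trivial bundle on the cylinder to the two ends yields only $\mathcal F_0\oplus\widetilde{\mathcal F}_0\simeq \mathcal F_1\oplus\widetilde{\mathcal F}_1$, i.e.\ a \emph{stable} isomorphism, and neither Lemma~\ref{l3.1} nor the direct-sum decomposition lets you cancel the complementary summand to recover $\mathcal F_0\simeq\mathcal F_1$ (cancellation in the K-theoretic semigroup is exactly what is not available before passing to the Grothendieck group). So the fallback cannot replace the main argument: the covering-homotopy/slab gluing has to be carried through in full, including the verification that the principal-bundle isomorphism descends through the relation $(w,a\cdot\xi)\sim(aw,\xi)$ to an $A(\mathfrak g)$-module isomorphism uniformly in $\lambda$, which you correctly flag as the delicate point.
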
 
We then have 
\begin{proposition}
\label{p3.6} 
For a rational $A(\mathfrak g)$-module $W$ with a decomposition 
 into irreducible modules $W^i$, 
 any twisted 
$A(\mathfrak g)$-bundle over $X$ 
$\E\simeq\oplus_{i=1}^p\V(\E)^i \otimes \W^i$ 
 with trivial twisted bundles $\W^i$ 
associated to $W^i$, and 
 vector bundles $\V(\E)^i$.  
\end{proposition}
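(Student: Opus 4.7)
The plan is to exploit rationality of $W$ together with Schur's lemma for irreducible $A(\mathfrak g)$-modules to diagonalize the cocycle of transition functions. By hypothesis $W=\bigoplus_{i=1}^{p}(W^{i})^{\oplus n_{i}}$, where the $W^{i}$ are the finitely many pairwise non-isomorphic irreducible summands (finiteness comes from rationality of $A(\mathfrak g)$, as noted after Theorem~\ref{P3.1}). First I would pass to the isotypic decomposition and write each fiber as $W\cong\bigoplus_{i=1}^{p}\C^{n_{i}}\otimes W^{i}$, viewing $\C^{n_{i}}$ as the multiplicity space of $W^{i}$ in $W$.

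Next I would use that the transition functions $g_{\a\b}(\xi)\colon W\to W$ supplied by the definition of a twisted $A(\mathfrak g)$-bundle are $A(\mathfrak g)$-module isomorphisms for every $\xi\in X_{\a}\cap X_{\b}$. By Lemma~\ref{l6.a} applied fiberwise, such an isomorphism must preserve the isotypic components $\C^{n_{i}}\otimes W^{i}$: it sends lowest-weight vectors of type $W^{i}$ to lowest-weight vectors of type $W^{i}$, and Theorem~\ref{P3.1} together with the homomorphism property $\Hom_{A(\mathfrak g)}(W_{(\l)},W_{(\mu)})=0$ for $\l\ne\mu$ from the definition of $\Oo_{A(\mathfrak g)}$ rules out mixing of distinct irreducible types. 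Hence $g_{\a\b}=\bigoplus_{i}g_{\a\b}^{i}$, where each $g_{\a\b}^{i}\in\Aut_{A(\mathfrak g)}(\C^{n_{i}}\otimes W^{i})$.

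The central step is to apply Schur's lemma: since $W^{i}$ is irreducible and $\Hom_{A(\mathfrak g)}(W^{i},W^{i})=\C\cdot\id$, every $A(\mathfrak g)$-automorphism of $\C^{n_{i}}\otimes W^{i}$ has the form $M_{\a\b}^{i}(\xi)\otimes\id_{W^{i}}$ for a unique $M_{\a\b}^{i}(\xi)\in \GL(n_{i},\C)$. Continuity of $g_{\a\b}$ in $\xi$ passes to $M_{\a\b}^{i}$, and the cocycle identity $g_{\a\ga}=g_{\a\b}*_{1}g_{\b\ga}$ transports to $M_{\a\ga}^{i}=M_{\a\b}^{i}M_{\b\ga}^{i}$. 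Thus $\{M_{\a\b}^{i}\}$ is a genuine $\GL(n_{i},\C)$-valued Čech cocycle on the covering $\{X_{\a}\}$ and defines an ordinary complex vector bundle $\V(\E)^{i}$ of rank $n_{i}$ over $X$.

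Finally I would reassemble: the trivial twisted $A(\mathfrak g)$-bundle $\W^{i}=W^{i}\times X$ is trivial in the sense introduced just before Proposition~\ref{p3.4}, and the tensor product $\V(\E)^{i}\otimes\W^{i}$, which by Lemma~\ref{tufta} is again a twisted $A(\mathfrak g)$-bundle, has transition cocycle $M_{\a\b}^{i}\otimes\id_{W^{i}}$. Summing over $i$ recovers exactly the cocycle $g_{\a\b}$ of $\E$, yielding the desired isomorphism $\E\simeq\bigoplus_{i=1}^{p}\V(\E)^{i}\otimes\W^{i}$. The main obstacle is verifying the fiberwise Schur argument in our infinite-dimensional setting where each $W^{i}$ is graded and the isomorphisms $g_{\a\b}$ must additionally commute with the grading and $\mathfrak G$-action; one has to check that the Schur scalars $M_{\a\b}^{i}$ remain well-defined and continuous on each graded piece, using that $\dim L(W^{i})_{(\l)}<\infty$ and that $\mathfrak G$ preserves the grading, so the Schur decomposition is compatible with all additional structure carried by the twisted bundle.
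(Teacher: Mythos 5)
Your proposal follows essentially the same route as the paper's proof: decompose the fiber into isotypic pieces $W\cong\bigoplus_{i}M_{i}\otimes W^{i}$, observe that the $A(\mathfrak g)$-module transition functions reduce to maps $h_{\a\b}\colon X_{\a}\cap X_{\b}\to\bigoplus_{i}\End(M_{i})$ on the multiplicity spaces, and use these as cocycles for the vector bundles $\V(\E)^{i}$ before tensoring back with the trivial bundles $\W^{i}$. The paper leaves the Schur's lemma justification and the cocycle/continuity checks implicit, so your write-up is a more complete version of the same argument rather than a different one.
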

\begin{proof}
 Let $\E$ be a twisted $A(\mathfrak g)$-bundle with fiber 
 $W=\oplus_{i=1}^p M_i \otimes W^i$,  
 where 
$M_i$ is the space of multiplicity of $W^i$ in $W$. 
Then each set of transition functions 
$\left\{g_{\a\b} \right\}$
 defines a map $h_{\a\b}:  X_\a\cap  X_\b \to \bigoplus_{i=1}^p \End (M_i)$. 
 For each $i$ we define a vector
bundle $\V(\E)^i$ over $X$ with fiber 
$M_i$ and transition functions
$\left\{ h_{\a\b}|\a,\b\in I\right\}$. 
Then we have a
$A(\mathfrak g)$-bundle $\V(\E)^i\otimes \W^i$.  
\end{proof}
\section{$K\left(A(\mathfrak g), X\right)$-group  
for twisted $A(\mathfrak g)$-bundles}    
\label{s4}
In this Seciton, for an associative algebra $A(\mathfrak g)$, 
 we introduce 
 the K-group $K(A(\mathfrak g), X)$ of a 
twisted $A(\mathfrak g)$-bundle on a compact topological space $X$, 
 and study corresponding properties.  
We follow the set-ups of \cite{A, DLMZ} with certain necessary extensions.
According to the definition of a twisted $A(\mathfrak g)$-bundle $\E$,  
 the set of its equivalence classes $[\E]$   
  is an abelian semigroup with addition given by the direct sum.  
 We denote by $K(A(\mathfrak g), X)$ the abelian group generated by the set of 
equivalence classes $[\E]$ of a twisted $A(\mathfrak g)$-bundle $\E$.     
For $\mathfrak g=\C$, the group $K(A(\mathfrak g), X)$ becomes the ordinary group $K(X)$ 
 as in \cite{A}.   
Let us define by  
$\Omega_0=\left(\mathfrak g, W\subset G_{(z_1, \ldots, z_n)}, K_G, (.,.) \right.$, 
 $\left. \theta, \beta(g', g''), K_0;
 \mathcal H, i_{(z_1, \ldots, z_n)},  X_{\alpha}, \alpha \in I 
 \right)$ 
 the extented moduli space for a twisted $A(\mathfrak g)$-bundle defined on a compact topological space $X$.
Let us now describe the set of parameters the isomorphism classes of twisted $A(\mathfrak g)$-bundles 
 depend on.  
Recall that $\mathcal H \subset {\rm Aut}_{(z_1, \ldots, z_n)}\; \Oo_{X}$   
of independent formal parameters $(z_1, \ldots, z_n)$ automorphisms on $X$. 
The construction involves 
 the category of subsets $W \subset G$   
which is a $\mathcal H$-module. 
$\mathcal H$-torsors and $X_{\a}$-twists of $W$ on $X_{\a}$ 
are determined by $\mathcal H$, $Aut_{X_\alpha}$, and $W$.   
 The elements of $\Theta\left(n, k, W, X_\a\right)$     
give rise to a collection of sections $F(x_1, \ldots, x_n)$ 
as prescribed rational functions.
The space $\Theta \left(n, k, W_{ (z_1, \ldots, z_n), \lambda } \right)$  for all $n$, $k \ge 0$,
 of prescribed rational functions is fixed by assumptions of Lemma. 
The module  $W \subset \Oo(A(\mathfrak g))$ is endowed with 
a $\C$-grading generated by $K_0$.   
Note that the trivializations   
$i_(z_1, \ldots, z_n) : F  (x_1, \ldots, x_n)= 
  (\theta, f(x_1, \ldots, x_n)  )   \to X_a$ 
are chosen in such way that they preserve the space $\Theta\left(n, k, W_{ (z_1, \ldots, z_n), \lambda } \right)$. 
The choice of trivializations 
$i_{(z_1, \ldots, z_n), X_\a }: W \; \widetilde{\to} \;  \E_{X_\a}$, 
is coherent with the choice of $\left\{X_\alpha\right\}$. 
Suppose we take another system of domains $\left\{ X'_{\alpha'} \right\}$. 
One shows that the construction of the bundle does not depend on the  
choice of transition functions on the intersections $X_\alpha \cap X'_{\alpha'}$. 
Thus, the equivalence classes do not depend on the choice of a covering. 
Since, by construction, the transition functions 
 $g_{\a \b}(x)= \left(g_{\a\b, \l}(\xi) \right): W \to W$,      
 are $A(\mathfrak g)$-module isomorphisms   
for any $\xi \in  X_\a \bigcap  X_\b$, 
then the construction of the bundle is invariant of the choice of transition functions. 
The twisted $A(\mathfrak g)$-bundle $\widetilde{\E}$ in Proposition \ref{p3.4} is constructed as 
follows \cite{Zu}. 
  Define an $A(\mathfrak g)$-bundle injective and bilinear form preserving homomorphism 
  $\psi: \E \to  W^{\; \oplus \;  s} \times X$ for some $s$,  
sending $\E$ to a trivial $A(\mathfrak g)$-bundle. 
Let us take $\widetilde{\E} =\psi( \E)^{\dagger}$ with respect to the bilinear form $(.,.)$.   
As the dual to $\E$, $\widetilde{\E}$ is an $A(\mathfrak g)$-bundle on $X$. 
From Proposition \ref{p3.4} we infer 
\begin{lemma}
\label{alja}
For the fixed set of data $(\mathfrak g$, 
$W \subset G$, $K_G$-grading, 
 $(.,.)$, $\theta \in W'_{(z_1, \ldots, z_n)}$, $\beta(g', g''))$ of $\Omega_0$,  
 the elements of 
$K(A(\mathfrak g), X)$ are 
of the form $[\E]/[\widetilde \E]|_{\mathcal H.W}$ 
up to $W$-preserving action 
 $\mathcal H.W$ 
of a subgroup  $\mathcal H \subset {\rm Aut}_{(z_1, \ldots, z_n)}$\; $\Oo_{\mathcal H.X}$,  
where ${\mathcal H.X}$ denotes the $\mathcal H$ preserving section and transition functions.   
\end{lemma}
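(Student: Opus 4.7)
The plan is to combine the standard Grothendieck group construction with the explicit complementation of Proposition \ref{p3.4}. Since by definition $K(A(\mathfrak g), X)$ is the abelian group generated by the semigroup of isomorphism classes $[\E]$ of twisted $A(\mathfrak g)$-bundles under direct sum, every element of $K(A(\mathfrak g), X)$ is represented by a formal difference $[\E_1] - [\E_2]$. My goal is to rewrite this as $[\E]/[\widetilde{\E}]$ with $\widetilde{\E}$ arising as the dual complement of $\E$ described in the text immediately preceding the lemma.

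First I apply Proposition \ref{p3.4} to the ``denominator'' $\E_2$ to obtain a twisted $A(\mathfrak g)$-bundle $\E_2'$ with $\E_2 \oplus \E_2' \cong \M$ a trivial $A(\mathfrak g)$-bundle. In the Grothendieck group this yields
\begin{equation*}
[\E_1] - [\E_2] = [\E_1 \oplus \E_2'] - [\M].
\end{equation*}
Setting $\E := \E_1 \oplus \E_2'$ and applying the explicit construction recalled just before the lemma, namely the injective, bilinear-form-preserving $A(\mathfrak g)$-bundle homomorphism $\psi: \E \to W^{\oplus s} \times X$ together with the dual $\widetilde{\E} := \psi(\E)^{\dagger}$ with respect to $(.\,,.)$, one obtains $\widetilde{\E}$ satisfying $\E \oplus \widetilde{\E}$ trivial. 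Since trivial classes act as the identity in $K(A(\mathfrak g), X)$ after stabilization, this produces the quotient form $[\E]/[\widetilde{\E}]$ asserted in the lemma.

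Second, I verify the invariance under the remaining choices encoded in $\Omega_0$ that are not part of the fixed data. The transition functions $g_{\alpha\beta}$ are $A(\mathfrak g)$-module isomorphisms by the very definition of a twisted $A(\mathfrak g)$-bundle, so replacing a covering $\left\{X_\alpha\right\}$ by a refinement or by any other covering $\left\{X'_{\alpha'}\right\}$ preserves the equivalence class $[\E]$. The trivializations $i_{(z_1,\ldots,z_n), X_\alpha}$ are chosen to preserve $\Theta\left(n,k,W_{(z_1,\ldots,z_n), \lambda}\right)$, so the sections $F(x_1,\ldots,x_n)$ transform coherently. What remains is the residual freedom to act by a subgroup $\mathcal H \subset {\rm Aut}_{(z_1,\ldots,z_n)}\, \Oo_{\mathcal H.X}$ preserving $W$ and the sections $F(x_1,\ldots,x_n) \to X_\alpha$, giving precisely the equivalence modulo $\mathcal H.W$ appearing in the statement.

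The main obstacle will be to show that the complement $\widetilde{\E}$ produced by Proposition \ref{p3.4} is unique up to addition of trivial bundles, so that the class $[\E]/[\widetilde{\E}]$ is well-defined in $K(A(\mathfrak g), X)$. The standard stabilization argument compares two such complements $\widetilde{\E}_1$ and $\widetilde{\E}_2$ with $\E \oplus \widetilde{\E}_i \cong \M_i$ trivial by adding the other complement to each side, producing an explicit $A(\mathfrak g)$-module isomorphism $\widetilde{\E}_1 \oplus \M_2 \cong \widetilde{\E}_2 \oplus \M_1$ that respects the $\mathcal H.W$-action. Proposition \ref{l4.1} can be invoked as a secondary tool to reduce coherence of this isomorphism to a point of $X$, where finite-dimensionality of the homogeneous components of $W$ makes the rigidity argument immediate.
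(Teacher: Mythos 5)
Your overall route --- invoking Proposition \ref{p3.4} together with the explicit complement $\widetilde{\E}=\psi(\E)^{\dagger}$ and then checking that the class is insensitive to the choice of covering, trivializations and transition functions, with the residual freedom being exactly the $W$-preserving $\mathcal H$-action --- is the same as the paper's. The paper in fact gives no separate proof of Lemma \ref{alja}: it is presented as an inference from Proposition \ref{p3.4} and the paragraphs immediately preceding it (the description of $\Omega_0$, the independence of the covering and transition functions, and the construction $\widetilde{\E}=\psi(\E)^{\dagger}$), so your reconstruction of that intended argument, including the Grothendieck-group bookkeeping and the closing uniqueness-of-complement discussion, is more explicit than what the paper records.

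There is, however, one step that does not hold as written. After obtaining $[\E_1]-[\E_2]=[\E_1\oplus\E_2']-[\M]$ you set $\E=\E_1\oplus\E_2'$, $\widetilde{\E}=\psi(\E)^{\dagger}$, and conclude the form $[\E]/[\widetilde{\E}]$ ``since trivial classes act as the identity in $K(A(\mathfrak g),X)$ after stabilization.'' That justification is false: the identity of the Grothendieck group is the class of the zero bundle, not the class $[\M]$ of a trivial bundle with nonzero fiber, and $[\M]\neq[\widetilde{\E}]$ in general. Concretely, if $\E\oplus\widetilde{\E}\cong\M'$ then $[\E]-[\widetilde{\E}]=2[\E]-[\M']$, which need not equal $[\E]-[\M]$; so the element you produced is of the form $[\E]/[\M]$ (which is the content of the \emph{subsequent} Lemma \ref{l4.3}, derived there from \ref{alja} and Proposition \ref{p3.4}), not yet of the form $[\E]/[\widetilde{\E}]$ with $\widetilde{\E}$ the constructed dual complement of $\E$. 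To close the gap you need an honest stabilization argument showing that, after adding a suitable trivial summand to $\E$, the class $[\M]$ in the denominator can be traded for the class of the complement $\psi(\E)^{\dagger}$ modulo the $\mathcal H.W$-equivalence; your final paragraph on uniqueness of complements up to trivial bundles is the right ingredient for this, but it is not carried out.
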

The statement of following Lemma follows from 
 Lemma \ref{tufta} and the construction of a twisted $A(\mathfrak g)$-bundle. 
\begin{lemma}
\label{l4.0} 
For two infinite-dimensional algebras $\mathfrak g$ and $\widetilde {\mathfrak g}$,  
the tensor product
of the twisted $A(\mathfrak g)$- and $A(\widetilde{\mathfrak g})$-bundles 
induces a natural group homomorphism
$K(A(\mathfrak g), X)\otimes_{\Z} K(A(\widetilde{\mathfrak g}), X) 
\to K( A(\mathfrak g) \otimes A(\widetilde{\mathfrak g}), X)$.  
\end{lemma}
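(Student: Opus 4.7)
The plan is to construct the homomorphism explicitly on generators and then extend it by bilinearity to the full tensor product of $K$-groups. First, for a pair $(\E_1, \E_2)$ of twisted $A(\mathfrak g)$- and $A(\widetilde{\mathfrak g})$-bundles over $X$, Lemma \ref{tufta} already supplies the object $\E_1 \otimes \E_2$ as a twisted $A(\mathfrak g) \otimes_{\C} A(\widetilde{\mathfrak g})$-bundle over $X$. I would define a set-theoretic map
\begin{equation*}
\mu:\; [\E_1] \times [\E_2] \;\longmapsto\; [\E_1 \otimes \E_2] \;\in\; K(A(\mathfrak g) \otimes A(\widetilde{\mathfrak g}), X),
\end{equation*}
and then check that it descends to a $\Z$-bilinear pairing on the generating semigroups.

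The key verifications are the following. First, well-definedness on equivalence classes: if $\E_1 \simeq \E_1'$ via transition functions related by a family of $A(\mathfrak g)$-module isomorphisms $g_{\a\b}$, and similarly for $\E_2 \simeq \E_2'$, then the tensored transition functions $g_{\a\b} \otimes \widetilde g_{\a\b}$ are $A(\mathfrak g)\otimes A(\widetilde{\mathfrak g})$-module isomorphisms by Lemma \ref{tufta}, so $[\E_1 \otimes \E_2] = [\E_1' \otimes \E_2']$. Second, additivity in each argument follows from the distributivity of tensor product over direct sum at the level of bundle fibers, compatibly with the grading $W = \bigoplus_\l W_{(\l)}$ and the corresponding decomposition $\E = \bigoplus_\l \E(W_{(z_1,\ldots,z_n),\l})$. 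Consequently $\mu$ factors through an additive map on the abelian semigroup of classes, and by the universal property of the group completion it extends to a homomorphism of abelian groups, which further factors through $\otimes_{\Z}$.

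To extend from the positive cone to the whole $K$-groups, I would invoke Lemma \ref{alja}: every element of $K(A(\mathfrak g), X)$ has the form $[\E]/[\widetilde \E]$, so the bilinear extension is forced by
\begin{equation*}
\bigl([\E_1]/[\widetilde{\E}_1]\bigr) \otimes \bigl([\E_2]/[\widetilde{\E}_2]\bigr) \;\longmapsto\; [\E_1\otimes \E_2] + [\widetilde{\E}_1 \otimes \widetilde{\E}_2] - [\E_1 \otimes \widetilde{\E}_2] - [\widetilde{\E}_1 \otimes \E_2].
\end{equation*}
Independence of the representing pair is routine once one checks that trivial twisted bundles $\mathcal W \times X$ go to trivial bundles under tensoring, which holds because the subgroup $\mathcal H \subset \Aut_{(z_1,\ldots,z_n)}\Oo_X$ acts diagonally on the tensor product fiber.

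The main obstacle I expect is the compatibility of the two independent torsor structures: the bundle $\E_1$ is built from a $\mathcal H_1$-torsor trivialization on $W_1$, and $\E_2$ from a $\mathcal H_2$-torsor on $W_2$, so one must verify that the diagonal action of a common subgroup of $\mathcal H_1 \times \mathcal H_2$ on $W_1 \otimes W_2$ preserves the prescribed-rational-function sections in $\Theta(n,k,W_1 \otimes W_2, X_\a)$ and the invariance data $(.,.)$ from Lemma \ref{l3.1}. Once this compatibility is settled using the $K_G$- and $T_G$-derivative properties \eqref{cond1}--\eqref{loconj} on each factor and the naturality of the symmetry condition \eqref{shushu} under tensoring, the homomorphism property becomes formal.
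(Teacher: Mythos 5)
Your proposal is correct and follows essentially the same route as the paper, which disposes of this lemma in a single sentence by citing Lemma \ref{tufta} together with the construction of a twisted $A(\mathfrak g)$-bundle; you simply make explicit the well-definedness, bi-additivity, and group-completion steps that the paper leaves implicit. No substantive divergence.
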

According to the definition of a twisted $A(\mathfrak g)$-bundle $\E$, the set of isomorphis classes 
of $\E$ is determined in particular by $\mathcal H$-invariant modules $W$ and 
  $\mathcal H$-invariant sections $F(z_1, \ldots, z_n)$.
The axioms of prescribed rational functions $F(x_1, \ldots, x_n)$ form a functional representation 
of $G$ with additional analytic behavior properties. 
 Lemma \ref{l4.0} shows that for each $s$ elements of the K-group is the tensor product of $A(\mathfrak g)$ 
 where each element if represented by commutative elements of $F(z_1, \ldots, z_n)$.  
Therefore, from Lemma \ref{l4.0} we obtain  
\begin{lemma}
\label{c} 
With $A(\mathfrak g)^{\otimes 0}\simeq \C$, the group $K=\bigoplus\limits_{s \geq 0}$    
$K\left(A(\mathfrak g)^{\otimes s}, X\right)$   
form a commutative algebra over $K(A(\mathfrak g), X)$. 
\end{lemma}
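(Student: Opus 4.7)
The plan is to upgrade the bilinear pairing provided by Lemma \ref{l4.0} into a graded, unital, commutative multiplication on $K$, and then to identify the degree-one piece $K(A(\mathfrak g),X)$ as the base ring over which $K$ becomes an algebra. First I would specialize Lemma \ref{l4.0} to the pair $(\mathfrak g^{\otimes r},\mathfrak g^{\otimes s})$, so that for every $r,s\ge 0$ the tensor product of twisted bundles from Lemma \ref{tufta} descends to a $\Z$-bilinear map
\[
\mu_{r,s}:K\!\left(A(\mathfrak g)^{\otimes r},X\right)\otimes_{\Z}K\!\left(A(\mathfrak g)^{\otimes s},X\right)\longrightarrow K\!\left(A(\mathfrak g)^{\otimes(r+s)},X\right),\qquad [\E_1]\otimes[\E_2]\mapsto[\E_1\otimes\E_2].
\]
Assembling $\{\mu_{r,s}\}$ into a single $\Z$-bilinear map on $K\otimes_{\Z}K$ then defines a graded product on the direct sum $K=\bigoplus_{s\ge 0}K\!\left(A(\mathfrak g)^{\otimes s},X\right)$.

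Next I would verify the algebra axioms. Associativity reduces to the canonical isomorphism $(\E_1\otimes\E_2)\otimes\E_3\simeq\E_1\otimes(\E_2\otimes\E_3)$ of twisted bundles, inherited from the associativity of $\otimes_{\C}$ at the level of associative algebras $A(\mathfrak g)^{\otimes r}\otimes_{\C}A(\mathfrak g)^{\otimes s}\simeq A(\mathfrak g)^{\otimes(r+s)}$; this isomorphism respects the transition functions $g_{\alpha\beta}$ constructed in Section~4, and hence gives equality of $K$-classes. Commutativity is handled analogously by the swap $A(\mathfrak g)^{\otimes r}\otimes A(\mathfrak g)^{\otimes s}\simeq A(\mathfrak g)^{\otimes s}\otimes A(\mathfrak g)^{\otimes r}$, which extends to a bundle isomorphism $\E_1\otimes\E_2\simeq\E_2\otimes\E_1$ and therefore yields $\mu_{r,s}([\E_1]\otimes[\E_2])=\mu_{s,r}([\E_2]\otimes[\E_1])$ in $K$. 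The unit is the class of the trivial $\C$-bundle $\C\times X$ sitting in the degree-zero summand $K(A(\mathfrak g)^{\otimes 0},X)\simeq K(\C,X)$ under the identification $A(\mathfrak g)^{\otimes 0}\simeq\C$; tensoring by it preserves every twisted $A(\mathfrak g)^{\otimes s}$-bundle up to canonical isomorphism.

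Finally, the module structure over $K(A(\mathfrak g),X)$ is extracted by restricting $\mu_{1,s}$ for each $s\ge 0$: multiplication by a class in $K(A(\mathfrak g),X)$ shifts the grading by one, and the already-established associativity and commutativity of $\mu$ imply that each $K(A(\mathfrak g)^{\otimes s},X)$ is a $K(A(\mathfrak g),X)$-module and that $K$ is a commutative algebra over $K(A(\mathfrak g),X)$.

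I expect the main obstacle to be the well-definedness of $\mu_{r,s}$ on equivalence classes, rather than the formal algebraic axioms. In particular, one must check that if $\E_1\simeq\E_1'$ and $\E_2\simeq\E_2'$ as twisted $A(\mathfrak g)^{\otimes r}$- and $A(\mathfrak g)^{\otimes s}$-bundles (respectively), then $\E_1\otimes\E_2\simeq\E_1'\otimes\E_2'$ as twisted $A(\mathfrak g)^{\otimes(r+s)}$-bundles; this uses that the families of continuous isomorphisms $H_{\alpha,\lambda}$ from Section~4 can be tensored, and that the resulting transition functions remain $A(\mathfrak g)^{\otimes(r+s)}$-module isomorphisms. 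A secondary subtlety is that the swap $\E_1\otimes\E_2\simeq\E_2\otimes\E_1$ must be compatible with the $\mathcal H$-twist data $i_{(z_1,\ldots,z_n),X_\alpha}$ on each local chart $X_\alpha$; here the $\mathcal H$-equivariance of the twist, together with Lemma \ref{alja} identifying classes up to $W$-preserving $\mathcal H$-action, ensures the swap descends to an equality in $K$.
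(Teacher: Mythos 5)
Your plan is correct and follows essentially the same route as the paper: the paper likewise deduces the lemma directly from Lemma \ref{l4.0} (the tensor-product homomorphism of K-groups arising from Lemma \ref{tufta}), with commutativity justified by the observation that classes are represented by commuting prescribed rational functions $F(x_1,\ldots,x_n)$, which is the functional counterpart of your swap isomorphism. Your version merely makes explicit the associativity, unit, well-definedness and $\mathcal H$-compatibility checks that the paper leaves implicit.
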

Next we state 
\begin{lemma}
\label{l4.3} 
 Any element of $K(A(\mathfrak g), X)$ has the form 
 $[\E]/[\M]$, where $\E$ is a twisted $A(\mathfrak g)$-bundle and 
$M$ is a $A(\mathfrak g)$-module. 
 For two equivalent classes $[\E]$, $[\E'] \in K(A(\mathfrak g), X)$,  
  there exists a $A(\mathfrak g)$-module $M$ such 
that $\E \cong \E'$ up to the trivial $A(\mathfrak g)$-bundle $\M$.  
\end{lemma}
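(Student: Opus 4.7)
The plan is to deduce both statements from Proposition \ref{p3.4}, which provides for every twisted $A(\mathfrak g)$-bundle a complementary summand that trivialises the direct sum, combined with the standard Grothendieck group-completion mechanism already used in the definition of $K(A(\mathfrak g), X)$ in Section \ref{s4}.

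For the first assertion, I would start from the observation that $K(A(\mathfrak g), X)$ is, by construction, the group completion of the abelian semigroup of equivalence classes $[\E]$ under $\oplus$. Hence an arbitrary element has the form $[\E_1] - [\E_2]$ for honest twisted $A(\mathfrak g)$-bundles $\E_1$ and $\E_2$. Apply Proposition \ref{p3.4} to $\E_2$ to obtain a twisted $A(\mathfrak g)$-bundle $\widetilde{\E_2}$ with $\E_2 \oplus \widetilde{\E_2}$ a trivial $A(\mathfrak g)$-bundle, and by definition the latter is $\M$ for some $A(\mathfrak g)$-module $M \in \Oo_{A(\mathfrak g)}$. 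Adding $[\widetilde{\E_2}]$ to both pieces of the formal difference yields
\[
[\E_1]-[\E_2] \;=\; [\E_1\oplus\widetilde{\E_2}] \;-\; [\E_2\oplus\widetilde{\E_2}] \;=\; [\E_1\oplus\widetilde{\E_2}] \;-\; [\M],
\]
so setting $\E := \E_1 \oplus \widetilde{\E_2}$ exhibits the given element in the required form $[\E]/[\M]$.

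For the second assertion, suppose $[\E] = [\E']$ in $K(A(\mathfrak g), X)$. From the Grothendieck construction there exists a twisted $A(\mathfrak g)$-bundle $\mathcal F$ such that $\E \oplus \mathcal F \cong \E' \oplus \mathcal F$ as $A(\mathfrak g)$-bundles over $X$. Invoke Proposition \ref{p3.4} one more time to produce $\widetilde{\mathcal F}$ with $\mathcal F \oplus \widetilde{\mathcal F} = \M$ trivial, and take the direct sum of both sides of the isomorphism with $\widetilde{\mathcal F}$:
\[
\E \oplus \M \;\cong\; \E' \oplus \M,
\]
which is exactly the claimed equivalence of $\E$ and $\E'$ up to the trivial $A(\mathfrak g)$-bundle $\M$.

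The main point to verify is that the auxiliary operations are compatible with the full data $\Omega_0$ attached to a twisted $A(\mathfrak g)$-bundle: the complement $\widetilde{\E_2}$ (respectively $\widetilde{\mathcal F}$) obtained from Proposition \ref{p3.4} must be equipped with a coherent system of $\mathcal H$-equivariant trivialisations $i_{(z_1,\ldots,z_n)}$, $A(\mathfrak g)$-module-valued transition functions $g_{\a\b}$, and sections drawn from $\Theta\left(n,k,W,X_\a\right)$, so that $\oplus$ and the identifications used above are genuine morphisms of twisted $A(\mathfrak g)$-bundles. This coherence is precisely what Lemma \ref{alja} records, and once it is in hand the argument reduces to the familiar group-completion identities transcribed to the present category. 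Given this, no further difficulty arises, and the proof amounts to the two direct-sum manipulations displayed above.
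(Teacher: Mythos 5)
Your proposal is correct and follows essentially the same route as the paper: the paper likewise writes a general element as a formal quotient $[\E']/[\E]$ (via Lemma \ref{alja}), applies Proposition \ref{p3.4} to replace the denominator by a trivial bundle $[\M]$ through $[\E']/[\E]=[\E'\oplus\widetilde{\E}]/[\E\oplus\widetilde{\E}]=[\E'\oplus\widetilde{\E}]/[\M]$, and for the second assertion cancels the auxiliary bundle $\E''$ witnessing $[\E]=[\E']$ by summing with its complement to obtain $\E\oplus\M\cong\E'\oplus\M$. Your closing remark on coherence with the data $\Omega_0$ matches the role the paper assigns to Lemma \ref{alja}.
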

\begin{proof} 
As we have shown in Lemma \ref{alja}, for every $\E$ one constructs a bundle $\E'$
such that 
 every element of $K(A(\mathfrak g), X)$ is of 
 the form $[\E]/[\E']$ up to $A(\mathfrak g)$-module $M$-preserving action of 
a subgroup $\mathcal H \subset {\rm Aut}_{(z_1, \ldots, z_n)} \; \Oo_X$.   
By Proposition \ref{p3.4},  
for any twisted $A(\mathfrak g)$-bundle $\E$,  
there exists a twisted 
$A(\mathfrak g)$-bundle $\widetilde{\E}$ 
and a $A(\mathfrak g)$-module $M$ 
such that $\E \oplus \widetilde{\E} \cong \M$. 
Thus we have
$[\E']/[\E]=  [\E' \oplus \widetilde{\E}]/[\E \oplus \widetilde{\E}]
=[\E'\oplus \widetilde{E}]/[\M]$.
If $[\E]=[\E']$ then there exists a $A(\mathfrak g)$-bundle $\E''$ such that
$\E \oplus \E'' \cong \widetilde{\E} \oplus \E''$. 
Let $\E$ be a $A(\mathfrak g)$-bundle such that
$\E'' \oplus \E\cong \M$ for some $A(\mathfrak g)$-module $M$. 
Then we have $E\oplus \M\cong \E' \oplus \M$. 
\end{proof}
Here we describe the group $K(A(\mathfrak g), (\xi_1, \ldots, \xi_n))$ for 
$(\xi_1, \ldots, \xi_n) \in X$. 
According to the construction of a twisted $A(\mathfrak g)$-bundle,  
one has a subgroup $\mathcal H_0 \subset {\rm Aut}_{(z_{\xi_1}, \ldots, z_{\xi_n})} \Oo_{(\xi_1, \ldots, \xi_n)}$ 
over a set $(\xi_1, \ldots, \xi_n) \in X$,    
such that an $A(\mathfrak g)$-module $W$ is $\mathcal H_0$-module.   
The local part of $\E$
is defined by $\E_{(\xi_1, \ldots, \xi_n)}$ $\mathcal H_0$-twists of $W_{(z_{\xi_1}, \ldots, z_{\xi_n})}$ 
attached to $(\xi_1, \ldots, \xi_n)$  
via mappings $i_{(z_1, \ldots, z_n)}: W_{(z_{\xi_1}, \ldots, z_{\xi_n})} \to \E_{(\xi_1, \ldots, \xi_n)}$. 
An invariant $\mathcal H_0$-action on $F(x_1, \ldots, x_n)$ is assumed. 
Basically, 
$\E_{(\xi_1, \ldots, \xi_n)}=W_{(z_{\xi_1}, \ldots, z_{\xi_n})}
 {\times \atop S_n} Aut_{(\xi_1, \ldots, \xi_n)}$. 
 The equivalence classes $[\E]|_{(\xi_1, \ldots, \xi_n)}$  
of twisted $A(\mathfrak g)$-bundle 
 $\E$ over a set of points $(\xi_1, \ldots, \xi_n) \in X$.  
 are given by equivalence classes of ${\mathcal H}_0$-invariant 
${\rm Aut}_{(z_{\xi_1}, \ldots, z_{\xi_n})} \Oo_{(\xi_1, \ldots, \xi_n)}$
 modules $W_{(z_{\xi_1}, \ldots, z_{\xi_n})}$. 
If $A(\mathfrak g)=\bigoplus_{i=1}^p A(\mathfrak g)$ is semisimple 
up to isomorphisms, then $K(A(\mathfrak g), (\xi_1, \ldots, \xi_n))$ is
isomorphic to the group $\Z^p\times \ldots \times \Z^p$ with generators $[\M^1], \ldots, [\M^p]$ 
for inequivalent $A(\mathfrak g)$-modules $M^1, \ldots, M^p$. 
Generalizing results of \cite{DLMZ}, we obtain. 
\begin{lemma}
\label{l4.4} 
For a semisimple associative algebra $A(\mathfrak g)$   
$K(A(\mathfrak g),  X)= K(X) \otimes_\Z K(A(\mathfrak g), (\xi_1, \ldots, \xi_n))$.   
\end{lemma}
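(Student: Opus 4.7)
The plan is to use Proposition \ref{p3.6} as the structural backbone, reducing the twisted K-theory to ordinary K-theory tensored with the K-theory over a point set. First I would fix a decomposition of the semisimple algebra $A(\mathfrak g)=\bigoplus_{i=1}^p A(\mathfrak g)^i$ into its simple summands and let $M^1,\ldots,M^p$ denote a representative set of inequivalent irreducible $A(\mathfrak g)$-modules, so that by the discussion preceding the lemma the group $K(A(\mathfrak g),(\xi_1,\ldots,\xi_n))\cong \Z^p$ is freely generated by the classes $[\M^1],\ldots,[\M^p]$ of the trivial bundles associated to the $M^i$.

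Next I would construct the comparison map
\[
\Phi: K(A(\mathfrak g),X)\longrightarrow K(X)\otimes_{\Z}K(A(\mathfrak g),(\xi_1,\ldots,\xi_n))
\]
as follows. Given a twisted $A(\mathfrak g)$-bundle $\E$ with rational fiber $W$, apply Proposition \ref{p3.6} to obtain a decomposition $\E\simeq\bigoplus_{i=1}^p\V(\E)^i\otimes\W^i$ in which the $\V(\E)^i$ are ordinary complex vector bundles over $X$ and the $\W^i$ are the trivial twisted bundles built from $M^i$. Define
\[
\Phi([\E])=\sum_{i=1}^p[\V(\E)^i]\otimes[\M^i],
\]
extend to formal differences $[\E]/[\widetilde{\E}]$ using Lemma \ref{l4.3}, and check additivity under direct sums (which is immediate from the direct-sum decomposition on each isotypic component). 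In the reverse direction, define
\[
\Psi([V]\otimes[\M^i])=[V\otimes\W^i],
\]
extended bilinearly and additively; by Lemma \ref{tufta} and Lemma \ref{l4.0} each $V\otimes\W^i$ is indeed a twisted $A(\mathfrak g)$-bundle, so $\Psi$ lands in $K(A(\mathfrak g),X)$.

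To finish I would verify that $\Phi\circ\Psi=\mathrm{id}$ and $\Psi\circ\Phi=\mathrm{id}$. The first is transparent from the definitions once one observes that the multiplicity bundle of $V\otimes\W^i$ in the $j$-th isotypic component is $V$ if $j=i$ and $0$ otherwise. The second follows by re-inserting the decomposition of Proposition \ref{p3.6} into $\Psi$ and using that the assignment $\E\mapsto(\V(\E)^1,\ldots,\V(\E)^p)$ respects direct sums and therefore passes to the Grothendieck group. Compatibility with the $K(A(\mathfrak g),(\xi_1,\ldots,\xi_n))$-module structure from Lemma \ref{c} is then automatic on generators.

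The main obstacle I anticipate is showing that $\Phi$ is well-defined, i.e.\ that the vector bundles $\V(\E)^i$ produced by Proposition \ref{p3.6} are uniquely determined up to isomorphism and depend only on the class $[\E]$. This requires arguing that an $A(\mathfrak g)$-bundle isomorphism $\E\cong\E'$ necessarily restricts to ordinary vector bundle isomorphisms $\V(\E)^i\cong\V(\E')^i$ on each isotypic piece, which in turn relies on the fact that $\Hom_{A(\mathfrak g)}(W^i,W^j)=0$ for $i\neq j$ together with Lemma \ref{l6.a} applied fiberwise so that the transition functions $h_{\alpha\beta}$ of Proposition \ref{p3.6} are intrinsically recovered from $g_{\alpha\beta}$. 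Once this canonical splitting is established, stability under adding trivial $A(\mathfrak g)$-bundles $\M$ (as in Lemma \ref{l4.3}) is straightforward because trivial bundles contribute trivial bundles on each multiplicity factor, and the splitting descends to the Grothendieck group without modification.
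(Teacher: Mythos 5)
Your proposal is correct and follows essentially the same route as the paper: both rest on the isotypic decomposition $\E\simeq\bigoplus_{i=1}^p\V(\E)^i\otimes\W^i$ from Proposition \ref{p3.6} together with the identification of $K(A(\mathfrak g),(\xi_1,\ldots,\xi_n))$ as the free abelian group on the classes $[\M^1],\ldots,[\M^p]$. You simply make explicit the mutually inverse maps $\Phi$ and $\Psi$ and the well-definedness of the multiplicity bundles, details the paper's proof leaves implicit in its chain of identifications.
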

\begin{proof}
 For rational $G$ correspoding twisted 
 $A(\mathfrak g)$-bundle $\E$ is determined by certain vector bundles. 
 Assume that its fiber $M=\oplus_{i=1}^p E_i\otimes M^i$ 
where $\{M^1, \ldots, M^p\}$ contains all inequivalent
 irreducible $A(\mathfrak g)$-modules, 
and $E_i$ is the space of multiplicity of $M^i$ in $M$.
Similar to Proposition \ref{p3.6}, we know that 
 if $A(\mathfrak g)$ is semisimple 
 then there exist vector bundles $\V(\E)^i$ for
$i=1, \ldots, p$ such that
 $\E \simeq \bigoplus_{i=1}^p \V\left(\E\right)^i \otimes \M^i$, where 
trivial bundles $\M^i$ correspond to 
 inequivalent set of $A(\mathfrak g)$-modules $M^i$.    
According our description of a twisted $A(\mathfrak g)$-bundle over a set 
$(\xi_1, \ldots, \xi_n) \in X$, 
one has a map to equivalence classes which gives 
$K(\bigoplus_{i=1}^p A(\mathfrak g)^i, X)= \bigoplus_{i=1}^p K( A(\mathfrak g)^i, X)=  
K(\bigoplus_{i=1}^p \V\left(\E\right)^i, X) \otimes_\Z K(\M^i)=
K(X)\otimes_\Z K(A(\mathfrak g), (\xi_1, \ldots, \xi_n))$.
\end{proof}
\section{Cohomology of $K$-cells} 
\label{s4.2}
By using results of \cite{DLMZ}, 
 we first define K-groups for twisted associative algebra
 bundles on factor spaces of compact topological spaces.    
Let $\cC_c$ denote the category of compact spaces, $\cC_0$ the category
of compact spaces with distinguished basepoints. 
We define a functor $\cC_0 \times \cC_0 \to \cC_0$, 
 by associating two compact topological spaces $X$, $Y$
 to a compact space $X/Y$ with base points $(\xi_1, \ldots, \xi_n)=Y/Y$. 
In the case $Y\neq \left\{ \emptyset \right\}$,  
 $X/Y$ is the disjoint union of $X$ with points $(\xi_1, \ldots, \xi_n)$, and 
 for $X\in\cC_c$, we denote $X_0= X/\left\{\emptyset\right\}$.  
If $X$ is in $\cC_0$, we define a functor 
 ${\rm Ker} \; i_0^*(X)$
to be the kernel of
 the map $i_0^*: K(A(\mathfrak g), X) \to K(A(\mathfrak g), (\xi_1, \ldots, \xi_n ))$
 where $i_0: (\xi_1, \ldots, \xi_n) \to X$ is the  
inclusion of the basepoints.  
If $c: X \to (\xi_1, \ldots, \xi_n)$ is the collapsing map 
then $c^*$ induces a splitting  
$K(A(\mathfrak g), X)= {\rm Ker} \; i_0^*(X) \oplus K(A(\mathfrak g), (\xi_1, \ldots, \xi_n))$. 
It is clear that $K(A(\mathfrak g), X) \simeq {\rm Ker} \; i^*(X_0)$. 
Now we define $K(A(\mathfrak g), X/Y)= {\rm Ker} \; i_0^*(X/Y)$. 
 In particular,  $K(A(\mathfrak g), X)  \simeq K(X_0)$.

Now let us introduce \cite{DLMZ} the generalized smash product operator in $\cC_0$,
 for $X$, $Y\in \cC_0$. 
 We put $X \wedge Y = X \times Y/X \vee Y$ 
for $X\vee Y = X\times \{(\xi_1, \ldots, \xi_n) \}
 \bigcup \{(\xi'_1, \ldots, \xi'_n) \} \times Y$, 
 being base-points of $X$, $Y$ respectively. 
For any triple of spaces
$X$, $Y$, $Z\in \cC_0$,  
 one has a natural homeomorphism $X\wedge (Y\wedge Z)\simeq (X\wedge Y)\wedge Z$. 
Let $I(t)$, $0 \le t \le 1$ we denote the boundary 
$\partial I(t) = \{0, 1\}$.  
Then $I(t)/\partial I(t) \simeq S^1 \in \cC_0$.  
For $X\in \cC_0$ we define the operator $\mathcal S$ 
of the reduced suspension on $X$ as  
  ${\mathcal S}X = S^1\wedge X \in \cC_0$. 
Then one has the $s$-th power of iterated suspensions 
${\mathcal S}^s X \cong S^s \wedge X$. 
 For $X \in \cC_0$ and $s \geq 0$ we consider 
${\rm Ker} \; i_0^*({\mathcal S}^s X)$. 
For $X$, $Y \in \cC_c$ let us define
 $K(A(\mathfrak g), X/Y)^{-s}$ $=$  $\left({\rm Ker} \; i_0^*\right)^{-s}$ $(X/Y)= 
 {\rm Ker} \; i_0^* ({\mathcal S}^s (X/Y))$, 
 $K(A(\mathfrak g), X)^{-s}$ $ = $ $ K(A(\mathfrak g), X_0)^{-s}$ 
= ${\rm Ker} \; i_0^* ({\mathcal S}^s (X_0))$. 
 We next define the cone functor $C: \cC_c \to \cC_0$ on $X$ by 
$C X= I(t) \times X/\{0\} \times X$, and 
 identify $X$ with the
  $\{1\}\times X \subset CX$.
 One calls 
$CX/X = I(t)\times X/\partial I(t)\times X$ the unreduced suspension of $X$. 
  In what follows, we assume that $X$ is a finite CW-complex
 and  $Y\subset X$ is a CW sub-complex. 
Next, we have a modification of a Lemma from \cite{DLMZ} in our context of twisted 
$A(\mathfrak g)$-bundles: 
\begin{lemma}
\label{t5.4}
For $\xi \in {\rm Ker} \; i_0^*({\mathcal S}X)$, 
  $T(t)= 1-t$, $0 \le t \le 1$ such that   
  $(T\wedge 1) : {\mathcal S}X \to {\mathcal S}X$  
and the identity on $X\in \cC_0$.  
Then $(T\wedge 1)^* \xi = - \xi$ for $\xi$.   
\end{lemma}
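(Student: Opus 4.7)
The plan is to realize the additive inverse in $\mathrm{Ker}\; i_0^*(\mathcal{S}X)$ via the co-H-space structure on the reduced suspension coming from the pinch map, and then exhibit $(T\wedge 1)$ as the inversion for that structure. Concretely, I would first introduce the pinch map
\[
\mu: \mathcal{S}X \to \mathcal{S}X \vee \mathcal{S}X
\]
obtained by collapsing $\{1/2\}\times X \subset I(t)\times X$ to the wedge point. Combined with the splitting $K(A(\mathfrak g),\mathcal{S}X\vee\mathcal{S}X)\cong \mathrm{Ker}\; i_0^*(\mathcal{S}X)\oplus \mathrm{Ker}\; i_0^*(\mathcal{S}X)$ (which follows from the splitting lemma $K(A(\mathfrak g), X)= \mathrm{Ker}\; i_0^*(X)\oplus K(A(\mathfrak g),(\xi_1,\ldots,\xi_n))$ applied to the wedge, together with \lemref{l3.1}), the pullback $\mu^*$ would be shown to implement the group addition on $\mathrm{Ker}\; i_0^*(\mathcal{S}X)$, so that for any $\xi,\eta\in \mathrm{Ker}\; i_0^*(\mathcal{S}X)$ one has $\mu^*(\xi,\eta)=\xi+\eta$.

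Next I would consider the composite
\[
f \;=\; \bigl((T\wedge 1)\vee 1\bigr)\circ \mu : \mathcal{S}X \longrightarrow \mathcal{S}X.
\]
In suspension coordinates, $f$ sends $(t,x)$ with $t\in[0,1/2]$ to $(1-2t, x)$ and $(t,x)$ with $t\in[1/2,1]$ to $(2t-1, x)$, so the suspension parameter traces the path $1\to 0\to 1$ in $I(t)/\partial I(t)\simeq S^1$, a null-homotopic loop based at the point $\partial I(t)$. Explicitly, the homotopy
\[
H_s(t,x) \;=\;
\begin{cases}
\bigl((1-2t)(1-s),\, x\bigr), & 0\le t\le 1/2,\\
\bigl((2t-1)(1-s),\, x\bigr), & 1/2\le t\le 1,
\end{cases}
\]
interpolates between $H_0=f$ and the constant map to the basepoint at $s=1$, and it sends $\partial I(t)\times X$ into the basepoint throughout, hence descends to a based homotopy on $\mathcal{S}X$.

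By the homotopy-stability of twisted $A(\mathfrak g)$-bundles (\propref{l4.1}), the induced map $f^*$ on $K(A(\mathfrak g), \mathcal{S}X)$ is zero. Unpacking $f^*$ using the identification above yields $f^*\xi = (T\wedge 1)^*\xi + \xi$, so $(T\wedge 1)^*\xi=-\xi$, as claimed. The main technical obstacle is the first step: one must verify that the abstract co-H-space addition $\mu^*$ agrees with the addition on $K(A(\mathfrak g),X)$ induced by the direct sum of twisted $A(\mathfrak g)$-bundles. In the classical case this is automatic from the fact that $K$-classes are represented by maps to a classifying space, but here bundles carry extra algebraic data (the $W$-preserving $\mathcal H$-action, the $A(\mathfrak g)$-module transition functions $g_{\alpha\beta}$) which must be preserved by the collapse; one checks this by noting that $\mu$ restricts to the identity on each half of $I(t)\times X$ away from the pinch locus and that $A(\mathfrak g)$-bundle pullback commutes with Whitney sum, so the pullback bundle under $\mu$ of a wedge of two bundles is precisely their Whitney sum over $\mathcal{S}X$.
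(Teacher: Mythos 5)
Your proposal is the standard co-H-space argument and it is a genuinely different route from the paper's. The paper proves the lemma by a clutching-function argument: it writes $\xi=[\E]/[\M]$ via Lemma \ref{l4.3}, decomposes ${\mathcal S}X$ as a union of two cones $C_1X\cup C_2X$, notes that $\E$ restricted to each cone is trivial (cones are contractible, so Proposition \ref{l4.1} applies), extracts the transition datum $\rho=\rho_2\circ\rho_1^{-1}:X\to{\rm Aut}_{G_{(z_1,\ldots,z_n)}}(M)$, and observes that $(T\wedge 1)$ interchanges the two cones and hence replaces $\rho$ by $\rho^{-1}$; the sum of the two classes then has trivial clutching data, which is the content of $(T\wedge 1)^*\xi=-\xi$. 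Your route instead realizes the addition by the pinch map $\mu$ and kills the fold composite by a null-homotopy. Both are classically valid, and your explicit homotopy together with the appeal to Proposition \ref{l4.1} is fine. What each buys: the paper's argument works directly with the bundles and never needs any exactness or wedge-splitting input; yours is cleaner conceptually but outsources the work to the co-H-space formalism.

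That outsourcing is where the gap lies in this particular setting. Your two key inputs --- the splitting $K(A(\mathfrak g),{\mathcal S}X\vee{\mathcal S}X)\cong{\rm Ker}\,i_0^*({\mathcal S}X)\oplus{\rm Ker}\,i_0^*({\mathcal S}X)$ and the identification of $\mu^*$ with the direct-sum addition --- both rest on half-exactness of ${\rm Ker}\,i_0^*$ for cofibre sequences. In this paper that is precisely what is \emph{not} available: Lemmas \ref{t5.1} and \ref{t5.2} assign to the relevant cells a nontrivial cohomology ${\rm Aut}_{(z_1,\ldots,z_n)}\,\Oo_{X/Y}|_{\mathcal H.W}$ rather than exactness. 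So the classical shortcut ``$\mu^*$ is the addition because $K$ is represented by a classifying space'' is unavailable, and the verification you defer to the last paragraph is not a formality: you would have to check at the level of transition functions and $\mathcal H$-torsor data that the pullback under $\mu$ of a wedge of two twisted $A(\mathfrak g)$-bundles is their Whitney sum, and that the wedge splitting is injective, not merely split surjective. Unless you carry that out, your conclusion holds only up to the same automorphism ambiguity that appears in Lemmas \ref{t5.1}--\ref{t5.2}, whereas the paper's cone/clutching argument reaches the statement directly.
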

\begin{proof}
  By construction of a twisted $A(\mathfrak g)$-bundle and according to Lemma \ref{l4.3},
 for any $\xi \in {\rm Ker} \; i_0^*({\mathcal S}X)$, 
 there exist a $A(\mathfrak g)$-bundle $\E$ and a $A(\mathfrak g)$-module $M$  
 such that $\xi = [\E]/[\M]|_{\mathcal H.M}$, where $\M$ is a trivial bundle associated to $M$. 
For two cones $C_1 X$ and $C_2 X$, 
 on the contructible $C_1 X \cup {C_2} X = C_1 X_{C_2} X={\mathcal S}X$,  
 the restrictions $\E|_{C X}$, $\E|_{C_2 X}$ of $\E$  
  are trivial. 
Then we can define maps 
 $\rho_i :\E|_{C_i X} \to  M \times C_i X$, $1 \le i \le 2$, such that   
 $\rho= \rho_2 \circ \rho_1^{-1}=
\rho^s: X \to {\rm Aut}_{G_{(z_1, \ldots, z_n)}} (M)$, $s \in \C$. 
According to definition of the smash product above, 
$(T\wedge 1)^*$ acts on $[\E]$ 
 $x\to \rho(x)^{-1}= (\rho^s)^{-1}$
 resulting in $\E' \in K(A(\mathfrak g), {\mathcal S}X)$.  
Since $\rho_i$ are isomorphisms of $M$, 
 from Lemma \ref{l3.1} we obtain 
that classes of isomorphisms of $\E$ and $\E'$ are the direct sum of classes of $\M$ and $\M'$ 
 in $K(A(\mathfrak g), {\mathcal S}X)$.  
\end{proof}
Now we establish the  
 cohomological properties of $K(A(\mathfrak g), X/Y)$. 
\begin{lemma}  \label{t5.1} 
For inclusions $i: Y\to X$, $j: X_0 \to X/Y$, 
 the cohomology of the cell    
${\mathcal K}_{X/Y}  
=K(A(\mathfrak g), X/Y) \stackrel{j^*}{\to} K(A(\mathfrak g), X) \stackrel{i ^*}{\to} K(A(\mathfrak g), Y)$ 
is given by $H_{K(A(\mathfrak g), X/Y)}= {\rm Ker}\;  i^*/{\rm Im}\; j^*
=
 {\rm Aut}_{(z_1, \ldots, z_n)} \Oo_{X/Y}|_{\mathcal H. W}$. 
\end{lemma}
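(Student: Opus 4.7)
The plan is to prove the lemma in two stages. First I would establish the sequence structure as a cochain complex by showing $i^{*}\circ j^{*}=0$, and second I would identify the resulting cohomology group with the prescribed automorphism group. For the cochain property, observe that the composition $j\circ i: Y\to X_{0}\to X/Y$ sends $Y$ into the basepoint collection $(\xi_{1},\ldots,\xi_{n})$ of $X/Y$. Since by construction $K(A(\mathfrak g),X/Y)=\mathrm{Ker}\, i_{0}^{*}(X/Y)$, any class $\zeta\in K(A(\mathfrak g),X/Y)$ already satisfies $i_{0}^{*}\zeta=0$, so $j^{*}\zeta$ pulls back trivially to $Y$. This gives $\mathrm{Im}\, j^{*}\subseteq \mathrm{Ker}\, i^{*}$, so the quotient $\mathrm{Ker}\, i^{*}/\mathrm{Im}\, j^{*}$ makes sense and equals $H_{K(A(\mathfrak g),X/Y)}$ by definition of cell cohomology.

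For the computation of the quotient, I would take $\xi\in \mathrm{Ker}\, i^{*}$ and invoke Lemma~\ref{l4.3} to represent $\xi=[\E]/[\M]$ with a twisted $A(\mathfrak g)$-bundle $\E$ and a trivial bundle $\M$ associated to an $A(\mathfrak g)$-module $M$. The condition $i^{*}\xi=0$ forces $\E|_{Y}$ to be stably isomorphic to $\M|_{Y}$; existence of a complementary bundle making the sum trivial is supplied by Proposition~\ref{p3.4}. A choice of trivialization $\tau:\E|_{Y}\xrightarrow{\sim}\M|_{Y}$ then permits clutching $\E$ against a trivial bundle over the cone $CY$, and by the homotopy stability of Proposition~\ref{l4.1} the result descends to a twisted $A(\mathfrak g)$-bundle on $X/Y$ whose class in $K(A(\mathfrak g),X/Y)$ maps to $\xi$ under $j^{*}$. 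Hence every $\xi\in\mathrm{Ker}\, i^{*}$ is hit up to the ambiguity in $\tau$.

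The cohomology $\mathrm{Ker}\, i^{*}/\mathrm{Im}\, j^{*}$ therefore measures the residual freedom in the choice of trivialization. Two admissible trivializations $\tau,\tau'$ of $\E|_{Y}$ differ by an $A(\mathfrak g)$-module automorphism $\tau'\circ\tau^{-1}\in\mathrm{Aut}(\M|_{Y})$ taken inside $\Oo_{\mathfrak G,A(\mathfrak g)}$, and compatibility with the sections $F(x_{1},\ldots,x_{n})\in\Theta(n,k,W,X_{\alpha})$ forces this automorphism to preserve the $\mathcal H$-action on $W$. Under the collapse $X\to X/Y$ these ambiguities glue into $\mathcal H$-preserving automorphisms of $\Oo_{X/Y}$ acting on $W$, giving the identification $\mathrm{Ker}\, i^{*}/\mathrm{Im}\, j^{*}=\mathrm{Aut}_{(z_{1},\ldots,z_{n})}\Oo_{X/Y}|_{\mathcal H.W}$. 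The opposite inclusion, that every $\mathcal H$-preserving automorphism on $X/Y$ arises from some clutching datum, would be proved by reversing the clutching construction: each such automorphism defines a transition cocycle on $X/Y$ and hence a twisted $A(\mathfrak g)$-bundle whose difference with a chosen reference bundle lies in $\mathrm{Ker}\, i^{*}$ and is nontrivial modulo $\mathrm{Im}\, j^{*}$.

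The main obstacle I anticipate is the last identification. While the clutching/ambiguity paradigm is standard in ordinary K-theory, here one must simultaneously track three structures: the $\mathcal H$-invariance of the sections $F(x_{1},\ldots,x_{n})$, the $A(\mathfrak g)$-module structure on $W$ encoded by the transition functions $g_{\alpha\beta}(x)$, and the descent of these data through the quotient $X\to X/Y$ near the basepoint, where $X/Y$ may acquire a singular local model. The delicate point is to verify that the combined constraints cut out exactly $\mathrm{Aut}_{(z_{1},\ldots,z_{n})}\Oo_{X/Y}|_{\mathcal H.W}$ and nothing smaller, which presumably requires a careful choice of covering $\{X_{\alpha}\}$ adapted to the pair $(X,Y)$ and an application of Lemma~\ref{alja} to normalize representatives of the form $[\E]/[\widetilde\E]|_{\mathcal H.W}$.
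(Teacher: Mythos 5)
Your proposal follows essentially the same route as the paper's proof: the cochain property $i^{*}\circ j^{*}=0$ via the basepoint collapse, representation of a class in $\mathrm{Ker}\,i^{*}$ as $[\E]/[\M]$ by Lemma~\ref{l4.3}, stabilization so that $\E\oplus\M'$ is trivial over $Y$, descent to $X/Y$ using the homotopy stability of Proposition~\ref{l4.1} (the paper extends the trivialization over a deformation-retracting neighborhood $\widetilde Y$ rather than clutching over the cone $CY$, but these are interchangeable), and identification of the residual ambiguity with $\mathrm{Aut}_{(z_{1},\ldots,z_{n})}\Oo_{X/Y}|_{\mathcal H.W}$. The "delicate point" you flag at the end — that the ambiguity is exactly this automorphism group and nothing smaller — is precisely the step the paper itself handles only by the terse assertion $j^{*}(b\tau)=b\xi$ for $b\in\mathrm{Aut}_{(z_{1},\ldots,z_{n})}\Oo_{X/Y}$, so your treatment is no less complete than the original.
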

\begin{proof}
Let $Y_0=Y/\left\{ \emptyset \right\}$. 
  The composition $i^*\circ j^*$ is induced by the composition
$j\circ i: Y_0 \to X/Y$, 
and $i^*\circ j^*=0$. 
Suppose now that $\xi \in {\rm Ker} \; i^*$.
According to Lemma \ref{l4.3}, 
 $\xi$ is represented in the form $([\E]/[\M])|_{\mathcal H.W_{(z_1, \ldots, z_n)}}$  
where $\E$ is a twisted 
$A(\mathfrak g)$-bundle over $X$ and $M$ is a $A(\mathfrak g)$-module. 
Since $i^*\xi =0$, 
it follows that $[\E]|_Y = [\M]$ in $K(A(\mathfrak g), Y)$.
 According to Lemma \ref{l4.3}, this implies that  
there exists a $A(\mathfrak g)$-module $M'|_{\mathcal H.W}$ such that 
 $(\E\oplus \M')|_{\mathcal H.W} = (\M \oplus \M')|_{\mathcal H.W}$,     
and the bundle $\M'$ is trivial. 
Now as $Y$ is a CW sub-complex of $X$, there exists an open
neighborhood $\widetilde{Y}$ of $Y$ in $X$ such that $Y$ 
satisfies the following condition with respect to $\widetilde{Y}$.  
For $0 \le t \le 1$,  
one can find a map $f(t): \widetilde{Y} \to \widetilde{Y}$  
such that $f(1)={\rm Id}_{\widetilde{Y}}$, 
$f(0)|_Y={\rm Id}_Y$, and 
$f(0)(\widetilde{Y})=Y$.
 By Lemma \ref{l4.1} 
the triviality of 
 the blow-up $(\E\oplus \M')|_{\widetilde{Y}}$ 
of $(\E\oplus \M')|_Y$ is homotopically preserved, 
 on $\widetilde{Y}$. 
 This defines a bundle $\E\oplus \M'/\alpha$ on $X/Y$, 
and an element 
$\tau = [\E\oplus \M'/\gamma]/[\M\oplus \M' ] \in {\rm Ker} \; i_0^*(X/Y) = K(A(\mathfrak g), X/Y)$, 
where $I^*: K(A(\mathfrak g), X) \to K(A(\mathfrak g), \xi)$. 
Since $\M'$ is trivial, 
$j^*(b \tau) =[\E\oplus \M'/\gamma]/[\M\oplus \M']|_{\mathcal H.W } 
=  [\E]/[\M]_{\mathcal H.W}= b \xi$, 
where $b \in {\rm Aut}_{(z_1, \ldots, z_n)} \; \Oo_{X/Y}$. 
Thus ${\rm Ker}\; i^* = \left({\rm Aut}_{(z_1, \ldots, z_n)} \; \Oo_{X/Y} \right) \; {\rm Im} \; j^*$. 
\end{proof}
\begin{lemma}
 \label{t5.2}
 For $X/Y \in \cC_c$ and $Y\in \cC_0$,
 the cohomology of 
the sequence 
$K(A(\mathfrak g), X/Y) \to {\rm Ker}\; i_0^*(X) \to {\rm Ker}\; i_0^*(Y)$,  
is given by ${\rm Aut}_{(z_{\xi_1}, \ldots, z_{\xi_n})} 
\; \Oo_{(\xi_1, \ldots, \xi_n)}|_{\mathcal H.W  }$  
up to the twist $\E_{\mathcal H.W }$. 
\end{lemma}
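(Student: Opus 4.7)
The plan is to adapt the argument of Lemma \ref{t5.1} to the reduced setting, where the groups $\mathrm{Ker}\, i_0^*(X)$ and $\mathrm{Ker}\, i_0^*(Y)$ replace the full K-groups on the right of the sequence. First I would identify the arrows: the first map is the inclusion $K(A(\mathfrak g), X/Y) = \mathrm{Ker}\, i_0^*(X/Y) \xrightarrow{j^*} \mathrm{Ker}\, i_0^*(X)$ induced by $j: X_0 \to X/Y$, and the second is restriction $i^*$ along $i: Y \to X$. Exactness at the middle term begins with the observation that $i \circ j: Y_0 \to X/Y$ factors through the basepoint, so $i^* \circ j^* = 0$, giving $\mathrm{Im}\, j^* \subseteq \mathrm{Ker}\, i^*$.

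Next, fix $\xi \in \mathrm{Ker}\, i^* \subset \mathrm{Ker}\, i_0^*(X)$. Using Lemma \ref{l4.3} write $\xi = [\E]/[\M]|_{\mathcal H.W_{(z_1,\ldots,z_n)}}$ for a twisted $A(\mathfrak g)$-bundle $\E$ over $X$ and a module $M$. Since $i^*\xi = 0$, Lemma \ref{l4.3} applied over $Y$ furnishes a trivial companion $\M'$ such that $(\E \oplus \M')|_Y \cong (\M \oplus \M')|_Y$ up to the $\mathcal H.W$-action. Exactly as in the proof of Lemma \ref{t5.1}, since $Y$ is a CW subcomplex of $X$ one chooses an open neighborhood $\widetilde Y$ of $Y$ that deformation retracts to $Y$; by the homotopy-stability Proposition \ref{l4.1}, the trivialization spreads to $\widetilde Y$. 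Collapsing $Y$ in this trivialization builds a bundle $(\E \oplus \M')/\gamma$ on $X/Y$ whose class $\tau \in K(A(\mathfrak g), X/Y)$ satisfies $j^*\tau = \xi$ modulo the ambiguity of the trivialization at the collapsed basepoint $(\xi_1, \ldots, \xi_n)$.

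To identify the cohomology $\mathrm{Ker}\, i^*/\mathrm{Im}\, j^*$, I would track this residual ambiguity explicitly. The trivialization used to collapse $Y$ is unique only up to the action of those $W$-preserving automorphisms of the bundle that act on the fiber $W_{(z_{\xi_1}, \ldots, z_{\xi_n})}$ at $(\xi_1, \ldots, \xi_n)$, namely the subgroup ${\rm Aut}_{(z_{\xi_1}, \ldots, z_{\xi_n})}\, \Oo_{(\xi_1, \ldots, \xi_n)}|_{\mathcal H.W}$ acting on the $\mathcal H$-twist $\E_{\mathcal H.W}$ at the basepoint. Two choices differing by such an automorphism produce bundles on $X/Y$ whose classes in $K(A(\mathfrak g), X/Y)$ have the same $j^*$-image, while distinct cosets of this group yield distinct preimages. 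Hence the fiber of $j^*$ above $\xi$ is a torsor under ${\rm Aut}_{(z_{\xi_1}, \ldots, z_{\xi_n})}\, \Oo_{(\xi_1, \ldots, \xi_n)}|_{\mathcal H.W}$, and the cohomology of the three-term sequence is given exactly by this group up to the twist $\E_{\mathcal H.W}$, as claimed.

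The main obstacle will be the third step: controlling the automorphism ambiguity precisely. In contrast with the untwisted vertex-operator algebra setting of \cite{DLMZ}, here the trivializations $i_{(z_1, \ldots, z_n), X_\alpha}$ depend essentially on a choice of local formal parameters, so the freedom over the collapsed basepoint is parametrized by the full ${\rm Aut}_{(z_{\xi_1}, \ldots, z_{\xi_n})}\, \Oo_{(\xi_1, \ldots, \xi_n)}$-torsor intersected with the stabilizer of $W$, rather than merely by module automorphisms of $W$. Making this bookkeeping precise, verifying that the induced action factors through exactly the quotient $|_{\mathcal H.W}$ appearing in the statement, and showing that no further quotient is needed (nor any additional twist beyond $\E_{\mathcal H.W}$), is the technical heart of the proof.
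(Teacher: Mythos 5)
Your proposal takes a genuinely different route from the paper. The paper's proof of this lemma is a three-line reduction: it invokes Lemma \ref{t5.1} (the cohomology of the unreduced cell $K(A(\mathfrak g), X/Y) \to K(A(\mathfrak g), X) \to K(A(\mathfrak g), Y)$) together with the splitting isomorphisms $K(A(\mathfrak g), X) \simeq {\rm Ker}\; i_0^*(X) \oplus K(A(\mathfrak g), (\xi_1, \ldots, \xi_n))$ and $K(A(\mathfrak g), Y) \simeq {\rm Ker}\; i_0^*(Y) \oplus K(A(\mathfrak g), (\xi'_1, \ldots, \xi'_n))$ induced by the collapsing maps, and concludes that passing from the full K-groups to the reduced kernels changes the answer by exactly the basepoint contribution, ``with the twist inserted.'' You instead re-run the entire bundle-collapsing argument of Lemma \ref{t5.1} (the factorization $i\circ j$ through the basepoint, the representation $\xi = [\E]/[\M]|_{\mathcal H.W}$ from Lemma \ref{l4.3}, the CW neighborhood $\widetilde Y$ and Proposition \ref{l4.1}, the descent to $X/Y$) inside the reduced groups. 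What the paper's route buys is that the localization of the cohomology at the basepoint is automatic: the reduced and unreduced sequences differ precisely by the basepoint K-group summands, so no new bundle-theoretic analysis is needed. What your route would buy, if completed, is a self-contained geometric explanation of why the answer is an automorphism group sitting over the collapsed point.

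The weak step is your third paragraph. Lemma \ref{t5.1}, whose argument you are transplanting, concludes that the ambiguity in the collapsing construction is measured by ${\rm Aut}_{(z_1, \ldots, z_n)}\, \Oo_{X/Y}|_{\mathcal H.W}$ --- automorphisms over all of $X/Y$, not over the basepoint. Your proposal asserts that in the reduced setting the same construction's ambiguity is instead concentrated in ${\rm Aut}_{(z_{\xi_1}, \ldots, z_{\xi_n})}\, \Oo_{(\xi_1, \ldots, \xi_n)}|_{\mathcal H.W}$, but nothing in your repetition of the Lemma \ref{t5.1} argument forces this localization; the collapsing construction and its trivialization ambiguity are identical in both settings. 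The missing ingredient is exactly the splitting $K(A(\mathfrak g), X) \simeq {\rm Ker}\; i_0^*(X) \oplus K(A(\mathfrak g), (\xi_1, \ldots, \xi_n))$: it is the comparison of the reduced sequence with the unreduced one of Lemma \ref{t5.1} that isolates the basepoint factor and produces the twist $\E_{\mathcal H.W}$. Without it, your ``torsor under the basepoint automorphism group'' claim is an assertion rather than a derivation, and your own closing paragraph concedes that this bookkeeping is the unfinished technical heart.
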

\begin{proof}
The statement follows from Lemma \ref{t5.1} and the decomposition isomorphisms 
 $K(A(\mathfrak g), X) \simeq {\rm Ker}\; i_0^*(X) \oplus K(A(\mathfrak g), (\xi_1, \ldots,  \xi_n))$,
 $K(A(\mathfrak g), Y)\simeq {\rm Ker}\; i_0^*(Y) \oplus K(A(\mathfrak g), (\xi'_1, \ldots, \xi'_n ))$.
Thus, we obtain the statement of the lemma with the twist inserted. 
\end{proof}
For $s \ge 0$ we next define the $(-s)$-th power of the cell of K-groups   
\[
{\mathcal K}_{X/Y}^{-s}  =  K(A(\mathfrak g), X/Y)^{-s}   
\stackrel{j^*}{\to} 
K(A(\mathfrak g), X)^{-s} \stackrel{i^*}{\to} K(A(\mathfrak g), Y)^{-s}.   
\]
For a map $\nu_1: Z \to Z/X$,  
and the isomorphism $\vartheta: K(A(\mathfrak g),  Z/X) \to K(A(\mathfrak g), Y)^{-1}$, 
let us define the operator 
$\delta^{(s)}: K(A(\mathfrak g), X/Y)^{-s} \to K(A(\mathfrak g), X/Y)^{-s-1}$,    
  as the operator $\delta = \nu_1^* \circ \vartheta^{-1}$ acting on $K(A(\mathfrak g), Y)^{-s}$.  
The main proposition of this paper is 
\begin{proposition}
\label{t5.3} 
The $s$-th cell cohomology 
 $H^s_{{\mathcal K}_{X/Y}}= {\rm Ker}\;  \delta^{(-s)}/{\rm Im}\;  \delta^{-(s+1)}$,   
of the left-infinite sequence  
\begin{equation*}
\cdots \stackrel{\delta^{(-s-2)}}{\longrightarrow}
{\mathcal K}_{X/Y}^{-s-1} \stackrel{\delta^{-(s+1)}}{\longrightarrow} 
{\mathcal K}_{X/Y}^{-s}
\stackrel{\delta^{(-s)}}{\longrightarrow} \ldots
 \stackrel{\delta^{(-1)}}{\longrightarrow} 
{\mathcal K}_{X/Y}^0 \stackrel{\delta^{(0)}}{\rightarrow}  0,  
\end{equation*}
for $s \ge 0$, 
is given by ${\rm Aut}_{(z_1, \ldots, z_n)} \Oo_{{\mathcal S}^s(X/Y)}|_{\mathcal H. W}$.  
\end{proposition}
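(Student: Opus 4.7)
The plan is to establish the proposition by induction on $s$, reducing everything to Lemma \ref{t5.1} via the suspension functor $\mathcal{S}$ on $\cC_0$. The base case $s=0$ is exactly Lemma \ref{t5.1}, since the sequence $\mathcal{K}_{X/Y}^0 \to 0$ has cohomology ${\rm Ker}\; i^*/{\rm Im}\; j^* = {\rm Aut}_{(z_1,\ldots,z_n)}\Oo_{X/Y}|_{\mathcal H. W}$, with $\delta^{(0)} = 0$ by definition. For the inductive step, I would apply $\mathcal{S}$ to the pair $(X,Y)$, use the natural homeomorphism $\mathcal{S}^s(X/Y) \simeq \mathcal{S}^s X/\mathcal{S}^s Y$, and then apply Lemma \ref{t5.1} to the suspended pair, identifying the result with $\mathcal{K}_{X/Y}^{-s}$ through the definition $K(A(\mathfrak g), X/Y)^{-s} = {\rm Ker}\; i_0^*(\mathcal{S}^s(X/Y))$.

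Next I would verify exactness at each node $\mathcal{K}_{X/Y}^{-s}$ using the Puppe-style cofiber construction. Recall that the mapping cone of $i: Y \to X$ is $X \cup_i CY$, which is homotopy equivalent to $X/Y$ since $Y$ is a CW subcomplex; further collapsing gives a cofiber $Z/X$ whose quotient is $\mathcal{S}Y$. The isomorphism $\vartheta: K(A(\mathfrak g), Z/X) \to K(A(\mathfrak g), Y)^{-1}$ in the definition of $\delta$ makes the connecting map $\delta^{(-s)} = \nu_1^* \circ \vartheta^{-1}$ into the standard boundary of the associated long exact sequence. Applying Lemma \ref{t5.1} to each suspended pair $(\mathcal{S}^s X, \mathcal{S}^s Y)$ and splicing together yields the left-infinite sequence, with the cohomology at the $s$-th node given by $\mathrm{Ker}\;\delta^{(-s)}/\mathrm{Im}\;\delta^{-(s+1)} = {\rm Aut}_{(z_1,\ldots,z_n)}\Oo_{\mathcal{S}^s(X/Y)}|_{\mathcal H. W}$.

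Two technical tools handle the signs and the twist data. Homotopy invariance from Proposition \ref{l4.1} lets me trivialize the restriction $(\E \oplus \M')|_{\widetilde{Y}}$ over a collar neighborhood of $\mathcal{S}^s Y$ in $\mathcal{S}^s X$, exactly as in the proof of Lemma \ref{t5.1}. Lemma \ref{t5.4}, which gives $(T \wedge 1)^* \xi = -\xi$ on $\mathrm{Ker}\; i_0^*(\mathcal{S}X)$, supplies the sign compatibility ensuring that the composed differentials $\delta^{(-s)} \circ \delta^{(-s-1)}$ vanish (so that ${\rm Im}\; \delta^{-(s+1)} \subset {\rm Ker}\;\delta^{(-s)}$), making the proposed quotient well-defined.

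The main obstacle will be tracking the twist factor $\mathcal H.W$ consistently through the iteration: since the formal-parameter automorphism group ${\rm Aut}_{(z_1,\ldots,z_n)}\Oo_{\mathcal{S}^s(X/Y)}$ depends on how the cone construction interacts with the local parameters $(z_1,\ldots,z_n)$ at the basepoint, one must verify that the $\mathcal H$-action on modules $W$ extends compatibly to each suspension $\mathcal{S}^s(X/Y)$, and that the trivializations $i_{(z_1,\ldots,z_n), X_\alpha}$ used in the construction of $\E \oplus \M'/\gamma$ in Lemma \ref{t5.1} transport correctly under $\mathcal{S}$. Once this coherence is secured, the inductive identification of ${\rm Ker}\;\delta^{(-s)}/{\rm Im}\;\delta^{-(s+1)}$ with ${\rm Aut}_{(z_1,\ldots,z_n)}\Oo_{\mathcal{S}^s(X/Y)}|_{\mathcal H. W}$ follows verbatim from the argument of Lemma \ref{t5.1} applied level by level.
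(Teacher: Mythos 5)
Your proposal is correct and follows essentially the same route as the paper: both reduce the left-infinite sequence to the $s=0$ cell computed in Lemma \ref{t5.1}, extend to all $s$ by replacing $X/Y$ with ${\mathcal S}^s(X/Y)$ (your induction on $s$ is just a reorganization of this step), and identify the connecting maps $\delta^{(-s)}$ via the Puppe-style cone constructions $Z=X\cup CY$, $Z/X$, together with homotopy invariance (Proposition \ref{l4.1}) and the sign Lemma \ref{t5.4}. Your closing remark about tracking the twist factor $\mathcal H.W$ through the suspensions flags the one point the paper itself treats only implicitly.
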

\begin{proof} 
The proof is a modification of the proof of Theorem (5.3) of \cite{DLMZ}  
for our case. 
Let us set 
${\mathcal K}^0_0 = {\rm Ker}\; i_0^*(X/Y) \stackrel{j^*}{\to} 
 {\rm Ker}\; i_0^*(X) \stackrel{i^*}{\to}  {\rm Ker}\; i_0^*(Y)$. 
One can see that the cohomology of the main sequence is equivalent to  
the cohomology of the sequence 
\begin{equation}
 \label{sequence1}
\ldots \stackrel{\delta^{(-2)}}{\to}  {\mathcal K}_{X/Y}^{-1}
\stackrel{\delta^{(-1)}}{\to}  {\mathcal K}^0_0 \stackrel{\delta^{(0)}}{\rightarrow}  0, 
\end{equation} 
 defined by $H^0_{ {\mathcal K}_{X/Y} }={\rm Ker}\; \delta^{(-1)}/{\rm Im}\; \delta^{(0)}$
 is ${\rm Aut}_{(z_1, \ldots, z_n)}$  
$\Oo_{X/Y}|_{\mathcal H. W}$. 
 By replacing
$X/Y$ by ${\mathcal S}^s(X/Y)$ for $s \ge 1$,  
we obtain cohomology of an infinite sequence continuing (\ref{sequence1}). 
 Then by replacing
$X/Y$ by $X_0/Y_0$ where $X/Y$ is any pair in $\cC_c\times \cC_c$,
 we get the cohomology of the infinite sequence in our Proposition. 
Recall that Lemma \ref{t5.2} gives the 
cohomology of mappings inside the cell $K_0$ of 
(\ref{sequence1}). 
To determine cohomology of other parts of the sequence 
 will apply Lemma \ref{t5.2} to $Z=X_CY=X \bigcup CY$,  
$Z/X$ and $Z_CX/Z=(Z\bigcup CX)/Z$. 
Let $\nu_1:Z \to Z/X$ and $\nu_2: X \to Z$ bet two inclusions. 
 By considering $Z/X$ we get the cohomology 
$H_{K_0, Z}={\rm Ker}\; \nu_2^*/{\rm Im}\; \nu_1^*$ of the sequence 
$K(A(\mathfrak g), Z/X) \stackrel{\nu_1^*}{\to}  {\rm Ker} \; i_0^* (A(\mathfrak g), Z)
\stackrel{\nu_2^*}{\to}  {\rm Ker} \; i_0^*(X)$, 
given by ${\rm Aut}_{(z_{\xi_1}, \ldots, z_{\xi_n})} \; \Oo_{(\epsilon_1, \ldots, \epsilon_n)}$ 
for $(\epsilon_1, \ldots, \epsilon_n)$ beeing base points for $Z/X$.  
Let $\widetilde{Y}$ be the neighborhood of $Y$ in $X$ as in Lemma \ref{t5.1}. 
Note that $CY$ is contractible. Then by Lemma \ref{l4.1} 
any twisted $A(\mathfrak g)$-bundle $\E$ 
 on $Z$ is trivial on $\widetilde{Y}_CY$. 
 Therefore 
 $p^*:  {\rm Ker}\; i_0^*(X/Y) \to {\rm Ker}\; i_0^*(Z)$   
is an isomorphism 
where $p: Z \to Z/CY =X/Y$ for the collapsing map.
The have the composition $j^*=\nu_2^*\circ p^*$ of maps. 
We then obtain the sequence
$K(A(\mathfrak g), Y)^{-1} \stackrel{\delta^{(-1)}}{\to}
 K(A(\mathfrak g), X/Y) \stackrel{j^*}{\to} {\rm Ker}\; i_0^*(X)$ given by 
${\rm Ker}\; i^*_0 ({\mathcal S}Y)^{-1} \stackrel{\delta^{(-1)}}{\to}
  {\rm Ker}\; i^*_0 (X/Y) \stackrel{j^*}{\to} {\rm Ker}\; i_0^*(X)$, 
which cohomology is 
 $H_{K_0, X/Y}={\rm Ker}\; j^*/{\rm Im}\; \delta^{(-1)} 
= {\rm Aut}_{(z_{\xi'_1}, \ldots, z_{\xi'_n})} \; \Oo_{(\epsilon'_1, \ldots, \epsilon'_n)}$. 
We apply Lemma \ref{t5.2} to approximations $(X_{C_1}Y)$ and  
$(X_{C_1} Y)_{C_2} X/(X_{C_1}Y)$ where we have denoted the cones $C_i$, $i=1$, $2$.
 Thus we obtain 
the sequence
\begin{align}\label{sequence4}
K(A(\mathfrak g), (X_{C_1}Y)_{C_2} X/ (X_{C_1} Y) ) \stackrel{\nu^*_3} {\to}  
{\rm Ker}\; i_0^* ( (X_{C_1} Y)_{C_2} X) \stackrel{\nu^*_4} {\to}  {\rm Ker}\; i_0^*( X_{C_1} Y ).  
\end{align}
One shows that this sequence is isomorphic to
 the sequence in ${\mathcal K}^{-1}_{X/Y}$ of (\ref{sequence1}).
According to the definition of $\delta$,  it is enough to will show the 
equivalence of the following two maps 
\begin{eqnarray}
\label{sequence5}
K(  A(\mathfrak g), (X_{C_1}Y)_{C_2}X/X_{C_1} Y  ) \stackrel{\nu^*_3} {\to}
 {\rm Ker}\; i_0^* ( (X_{C_1} Y)_{C_2} X ) \stackrel{\nu^*_5} {\to}
 {\rm Ker}\; i_0^*(C_2X/X) 
\\
\nonumber
= K(A(\mathfrak g), X)^{-1}  
\stackrel{i^*}{\rightarrow} K(A(\mathfrak g), Y)^{-1}={\rm Ker}\; i_0^*(C_1Y/Y).
\end{eqnarray}   
Using Lemma \ref{t5.4}, 
according to definition of $C_i$, $i=1$, $2$ we obtain the sequence 
${\rm Ker}\; i_0^*(\mathcal SY) \to 
K( A(\mathfrak g), C_1 Y/Y ) \to K(A(\mathfrak g), (C_1 Y)_{C_2} Y ) $, 
which is equivalent to 
$K(A(\mathfrak g), C_2 Y/Y)  \to {\rm Ker}\; i_0^*(\mathcal SY)$.  
Finally, we get   
\begin{eqnarray*}
&&H^0_{{\mathcal K}_{X/Y}} 
= {\rm Ker} \; \delta^{(0)}/ {\rm Im} \; \delta^{(-1)}
= H_{{\mathcal K}^0_0}
 ={\rm Aut}_{(z_1, \ldots, z_n)} \; \Oo_{X/Y}|_{\mathcal H. W}.
\end{eqnarray*}
\end{proof}

\end{document}